\DeclareMathOperator{\dm}{dom}
\DeclareMathOperator{\im}{img}
\newcommand{\rl}{{\mathbb{R}}}
\newcommand{\cx}{{\mathbb{C}}}
\newcommand{\dbar}{\overline{\partial}}
\newcommand{\csor}{{\widehat{\otimes}}}
\newcommand{\tensor}{\otimes}
\newcommand{\pss}{\widetilde{W}}
\newcommand{\smooth}{\mathcal{C}^\infty}
\newcommand{\der}{\mathsf{D}}
\newcommand{\abs}[1]{\left|{#1}\right|}
\newcommand{\norm}[1]{\left\|{#1}\right\|}
\newcommand{\length}[1]{\left[#1\right]}
\newtheorem{thm}{Theorem}[section]
\newtheorem{lem}[thm]{Lemma}
\newtheorem{cor}[thm]{Corollary}
\begin{document}
\title{ The Cauchy-Riemann equations on   product domains}
\author{Debraj Chakrabarti}
\address{Department of Mathematics,  University of Notre Dame,
Notre Dame, IN 46556, USA} \email{dchakrab@nd.edu}
\author{Mei-Chi Shaw}
\address{Department of Mathematics,  University of Notre Dame,
Notre Dame, IN 46556, USA} \email{mei-chi.shaw.1@nd.edu}
\begin{abstract} We  establish the $L^2$ theory for   the Cauchy-Riemann equations 
on product domains provided that the Cauchy-Riemann operator has closed range on each factor.
We deduce regularity of the canonical solution on $(p,1)$-forms in  special Sobolev spaces represented
as tensor products of Sobolev spaces on the factors of the product. This leads to regularity 
results for smooth data.

\end{abstract}
\maketitle
\maketitle
\section{Introduction}

In this paper we study the existence and regularity for the solution of the inhomogeneous Cauchy-Riemann equations,
or the $\dbar$-equation  on product domains.  
When the product domain  is a polydisc in $\cx^n$, 
the solution to the  $\dbar$-equation can be obtained
by an inductive process from the solution in one variable 
given by the  Cauchy integral formula for the disc.
This  is known as the Dolbeault-Grothendieck Lemma 
(see \cite[Theorem~2.1.6]{cs}. For other approaches, see \cite{nick,nw}.)
which is the analog for the $\dbar$-operator
of the Poincar\'{e} lemma for the exterior derivative $d$.

We are interested here in the $\dbar$-problem in the $L^2$ setting.
For a bounded pseudoconcovex domain in $\cx^n$, or more generally in a Stein manifold,
$L^2$ existence theorems have been established in H\"ormander \cite{Hor1}. We prove
$L^2$ existence on a product, i.e., we show that $\dbar$ has closed range on a product
provided that $\dbar$ has closed range on  each factor domain . 

\begin{thm}\label{thm-main} For $j=1,...,N$, let $\Omega_j$ be a  
relatively compact domain with Lipschitz boundary in a complex hermitian manifold  $M_j$ .   
Let $\Omega\subset M_1\times \dots\times M_N$  be the product domain
$\Omega=\Omega_1\times\dots\times\Omega_N$. Suppose the  $\dbar$ 
operator has closed range in $L^2(\Omega_j)$ for all degrees  for each $j$,
then the $\dbar$ operator has closed range for all degrees  in $L^2(\Omega)$. 
Furthermore,  the  K\"{u}nneth formula holds for the $L^2$ cohomology:
\[   H^*_{L^2}(\Omega)= H^*_{L^2}(\Omega_1)\csor \dots\ \csor H^*_{L^2}(\Omega_N),\] 
where   $\csor$ denotes the Hilbert space tensor product. 
\end{thm}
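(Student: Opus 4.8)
The strategy is to realize the $L^2$ Dolbeault complex of $\Omega$ as the Hilbert space tensor product of the $L^2$ Dolbeault complexes of the factors, and then to prove a Künneth theorem for abstract \emph{Hilbert complexes} with closed range. Since a finite product of relatively compact Lipschitz domains is again a relatively compact Lipschitz domain, and since the assertion to be proved is exactly ``closed range in all degrees'' together with a Künneth identity that is associative for $\csor$, it suffices by induction on $N$ to treat the case $N=2$; write $X=\Omega_1$, $Y=\Omega_2$.

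Recall the functional-analytic setup. A Hilbert complex is a cochain complex $(E^\bullet,d)$ of Hilbert spaces with densely defined closed operators $d_q\colon E^q\to E^{q+1}$, $d_{q+1}d_q=0$; it has closed range if every $d_q$ does. For such a complex one has the Hodge decomposition, which under the closed-range hypothesis reads $E^q=\mathcal H^q\oplus\operatorname{ran} d_{q-1}\oplus\operatorname{ran} d_q^*$ with $\ker d_q=\mathcal H^q\oplus\operatorname{ran} d_{q-1}$, where $\mathcal H^q=\ker d_q\cap\ker d_{q-1}^*$ is the harmonic space, isomorphic as a Hilbert space to the cohomology $H^q$. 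The essential consequence of closed range (recall only finitely many degrees occur) is the a priori estimate: a \emph{single} constant $C$ with $\norm{u}\le C\norm{d_qu}$ for all $u$ in the domain of $d_q$ orthogonal to $\ker d_q$, i.e.\ a bounded solution operator of norm $\le C$. This yields a splitting $E^\bullet=\mathcal H^\bullet\oplus N^\bullet$ with $N^q=E^q\ominus\mathcal H^q$, both summands subcomplexes, $\mathcal H^\bullet$ carrying zero differential, and $N^\bullet$ acyclic with a \emph{bounded} contracting homotopy $h$ ($dh+hd=\mathrm{id}$ on $N^\bullet$, $\norm{h}\le C$) built from the solution operator. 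By hypothesis the $L^2$ Dolbeault complex of each factor, in each bidegree, is a Hilbert complex with closed range, so all of this applies with constants $C_X,C_Y$.

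Next I identify complexes. Distributing $L^2(X\times Y)=L^2(X)\csor L^2(Y)$ over the finite-rank form bundles gives a bigraded identification $L^2_{(p,q)}(X\times Y)=\bigoplus_{p_1+p_2=p,\ q_1+q_2=q}L^2_{(p_1,q_1)}(X)\csor L^2_{(p_2,q_2)}(Y)$, under which one must check that $\dbar_{X\times Y}$ becomes the total differential $\dbar_X\tensor 1\pm 1\tensor\dbar_Y$ (suitable signs). On decomposable smooth forms $f(z)\,\omega_1(z)\tensor g(w)\,\omega_2(w)$ this is a formal computation; promoting it to an identity of closed operators requires that such forms are dense in the graph norm of the maximal $\dbar_{X\times Y}$, which is where Lipschitz regularity of the factors (hence of the product) enters, via a mollification/approximation argument. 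Granting this, run the abstract Künneth argument: using $E_X^\bullet=\mathcal H_X^\bullet\oplus N_X^\bullet$, $E_Y^\bullet=\mathcal H_Y^\bullet\oplus N_Y^\bullet$ and the distributivity of $\csor$ over orthogonal direct sums, the total complex splits, as complexes, into
\[ (E_X\csor E_Y)^\bullet=(\mathcal H_X\csor\mathcal H_Y)\oplus(\mathcal H_X\csor N_Y)\oplus(N_X\csor\mathcal H_Y)\oplus(N_X\csor N_Y). \]
The first summand has zero differential and contributes $\mathcal H_X\csor\mathcal H_Y$ to cohomology. Each remaining summand has $N_X$ or $N_Y$ as a factor, and since $h_X\tensor 1$ (resp.\ $1\tensor h_Y$) is a contracting homotopy of the tensor of a contractible complex with \emph{any} complex — the cross terms cancel by the Koszul sign rule — each is acyclic with a contracting homotopy of norm $\le\max(C_X,C_Y)$. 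Hence their orthogonal direct sum is acyclic with a bounded homotopy, so has closed range, and therefore so does the whole complex (its range is $\{0\}$ plus a closed subspace), with cohomology $\mathcal H_X\csor\mathcal H_Y\cong H^\bullet_{L^2}(X)\csor H^\bullet_{L^2}(Y)$. Iterating over the $N$ factors and invoking associativity of $\csor$ completes the proof.

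The step I expect to be the main obstacle is precisely the control of constants: a Hilbert direct sum of acyclic complexes need \emph{not} have closed range if the norms of the contracting homotopies are unbounded (e.g.\ $\bigoplus_n(\cx\xrightarrow{1/n}\cx)$), so it is crucial that the closed-range hypothesis on each factor furnishes one solution-operator bound valid in all bidegrees simultaneously, and that tensoring with the identity preserves this bound — this is the exact point where the hypothesis is consumed. A secondary, more technical, difficulty is the density/core statement identifying $\dbar_{X\times Y}$ with the closure of the algebraic tensor-product differential on decomposable forms; it must be done carefully enough that the Hodge decomposition of the product complex is genuinely the tensor product of the Hodge decompositions of the factors, so that the algebraic bookkeeping above is legitimate.
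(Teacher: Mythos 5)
Your proposal is correct and is essentially the paper's own argument in different packaging: the graph-norm density of smooth decomposable forms (Friedrichs on Lipschitz domains), the identification $\dbar=\dbar_1\csor I_2+\sigma_1\csor\dbar_2$, and a bounded homotopy built from the canonical solution operators and harmonic projections of the factors (your blockwise homotopies $h_X\tensor 1$ on $N_X\csor E_Y$ and $1\tensor h_Y$ on $\mathcal H_X\csor N_Y$ reassemble into the paper's single operator $S=K_1\csor I_2+\sigma_1P_1\csor K_2$ satisfying $\dbar S+S\dbar=I-P_1\csor P_2$). The only real difference is the last step: you extract closed range directly from the bounded contracting homotopy on the acyclic complement, while the paper routes through Kodaira's converse (Lemma~\ref{lem-j}: surjectivity of the harmonic representation map implies closed range) --- and your worry about uniform constants is moot here since only finitely many orthogonal summands occur, each with an explicitly bounded homotopy.
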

If the $L^2$ space on the domain $\Omega_j$ is defined with respect to a weight function
$\phi_j$, i.e., if on $\Omega_j$ we use the norm $\int_{\Omega_j}\abs{f}^2e^{-\phi_j}dV$,
the same statement holds if $\sum_{j=1}^N\phi_j$ is used as a weight on the product $\Omega$.

A classical approach to the study of partial differential equations on product domains
is by separation of variables and spectral representation. This method
can be applied to the $\Box$-operator (the complex Laplacian $\dbar\dbar^*+\dbar^*\dbar$):
see \cite[pp. 103ff]{gh} for the case 
of compact complex manifolds and \cite{fu} for the case of the polydisc. A proof of
a general version of Theorem~\ref{thm-main} (without the assumption of relative compactness or boundary regularity of the $\Omega_j$) may be given using separation of variables and spectral theory (see \cite{cproc}.) However,
it is difficult to use this method to draw conclusions about the regularity of the 
solution of the $\dbar$-equation.
There is a different approach, using a direct construction of a solution operator on 
a product domain,
used first in \cite{zuck} for the de Rham complex,
and we use this approach to prove Theorem~\ref{thm-main}.
Using this method we not only prove Theorem~\ref{thm-main},
but also obtain regularity results for the canonical solution of $\dbar$.

 The closed-range property given by Theorem~\ref{thm-main}
 has numerous applications.  First it immediately gives  that the Hodge decomposition holds
for the product domain $\Omega$. Notice that it is not assumed that 
the domains $\Omega_j$ are non-compact:  the  theorem can be applied to the case when the product
domain is a product $D\times M$  of a bounded  pseudoconvex domain $D$  in $\cx^n$ and  a compact complex manifold  $M$. 
Though the proof for Theorem~\ref{thm-main} is not difficult, it has not been stated explicitly in the literature.

We obtain boundary regularity results  for the  canonical solution of the $\dbar$-equation on  product domains in
$\cx^n$ or complex hermitian manifolds. The regularity for the canonical solution of the $\dbar$-equation and the $\dbar$-Neumann operator on a polydisc have been studied extensively 
(see \cite{eh1,eh2,eh3,jb} and the references in these works.)
There is also a considerable amount of work  for the $\dbar$-equation 
on domains with Lipschitz boundary or piecewise smooth domains (see \cite{ms4}). 
Notice that a product domain is only
{\em piecewise} smooth even if each factor domain has smooth boundary.  Thus  the boundary is only  Lipschitz. It is known that on a general Lipschitz domain,  the  $\dbar$-Neumann operator or even the Green's operator for the Dirichlet problem (see \cite{BV,mcs2})  is not regular near the singular part of the domain.
Thus one cannot expect  the $\dbar$-Neumann operator to be regular 
near  the product of the boundaries of the factor domains. This is confirmed
by the explicit computations in \cite{eh1,eh2,eh3}. One would expect that the canonical solution
might also not be regular. An interesting feature is that while the $\dbar$-Neumann
operator on product domains might  not be  well-behaved, the canonical solution still exhibits
regularity on certain Sobolev spaces.  Before our results here, only $\mathcal{C}^k$ estimates were
known   for the special case of the   polydisc  using an explicit integral formula \cite{lan}.

In order to state precise regularity results on the canonical solution operator 
we introduce  special Sobolev spaces, called {\em Partial Sobolev spaces},
denoted by $\pss^k(\Omega)$  (for  definition of $\pss^k(\Omega)$, see $\S$\ref{sec-partialsobolev}.)
If $W^k(\Omega)$ denotes the usual Sobolev space of functions having $L^2$-derivatives of order $k$
on $\Omega$, we have $W^{Nk}(\Omega)\subset \pss^k(\Omega)\subset W^k(\Omega)$.
We prove the following regularity result for the canonical
solution operator  in the  partial  Sobolev spaces  on a product  pseudoconvex domain. 
\begin{thm}\label{thm-reg}
Let $\Omega$ be the same as in Theorem~\ref{thm-main}. Then the $\dbar$-Neumann operator  $\mathsf{N}$ exists for  all degrees on  $(p,q)$-forms with $L^2$ coefficients.   Assume further that for each $j$, the domain
$\Omega_j$ is smoothly bounded, and  the $\dbar$-Neumann
operator on  $\Omega_j$ preserves the space of forms with coefficients in $W^k(\Omega_j)$  for every integer $k\geq 0$.
For any $p$ with $0\leq p\leq \dim_\cx\Omega$,  let $f$ be a  $\dbar$-closed $(p,1)$-form on $\Omega$
orthogonal to the $(p,1)$-harmonic forms such that
the coefficients of $f$ are in the partial Sobolev space $\pss^l(\Omega)$, for some integer $l\geq 0$. Then, the canonical
solution $u=\dbar^* \mathsf{N}f$ of the equation $\dbar u=f$ also has coefficients in $\pss^l(\Omega)$.
\end{thm}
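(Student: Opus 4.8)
The plan is to reduce to $p=0$, write down an explicit formula for the canonical solution built only out of the canonical solution operators and Bergman projections of the \emph{factors}, and then read off the regularity from the (known) Sobolev regularity of those operators on each factor together with the tensor-product structure of $\pss^l(\Omega)$. First I would reduce to $p=0$: since $\dbar$ does not act on the holomorphic differentials, a $(p,q)$-form is a finite tuple of $(0,q)$-forms and the argument applies coordinatewise. Under the natural identification
\[
L^2_{(0,1)}(\Omega)=\bigoplus_{j=1}^{N} L^2(\Omega_1)\csor\dots\csor L^2_{(0,1)}(\Omega_j)\csor\dots\csor L^2(\Omega_N),
\]
write $f=f_1+\dots+f_N$ with $f_j$ in the $j$-th summand, and let $\dbar_j$ be the part of $\dbar$ differentiating in the $j$-th variable, so that $\dbar f=0$ is equivalent to $\dbar_j f_j=0$ for all $j$ together with $\dbar_i f_j+\dbar_j f_i=0$ for $i\neq j$. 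Let $\mathsf N_j$, $\mathsf S_j=\dbar_j^*\mathsf N_j$ and $\mathsf B_j$ denote the $\dbar$-Neumann operator, the canonical solution operator on $(0,1)$-forms, and the Bergman projection on $\Omega_j$; each acts in the $j$-th slot of the tensor product, and operators acting in different slots commute or anticommute (Koszul signs).

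The heart of the matter is that the solution operator furnished by the proof of Theorem~\ref{thm-main} can be arranged so that, for a $\dbar$-closed $(0,1)$-form $f$ orthogonal to the space $\mathcal H^{(0,1)}(\Omega)$ of $L^2$ harmonic $(0,1)$-forms, the canonical solution is
\[
u=\dbar^*\mathsf N f=\sum_{j=1}^{N}\mathsf S_j\,\bigl(\mathsf B_{j-1}\csor\dots\csor\mathsf B_{1}\bigr)\,f_j ,
\]
where the parenthesized operator (the identity when $j=1$) acts on the first $j-1$ factors. I would establish this by induction on $N$: for $N=2$, using $\mathsf S_j\dbar_j=\mathrm{id}-\mathsf B_j$ on $L^2(\Omega_j)$ and $\dbar_j\mathsf S_j=\mathrm{id}$ on $\dbar_j$-closed $(0,1)$-forms orthogonal to $\mathcal H^{(0,1)}(\Omega_j)$, one checks directly that $u=\mathsf S_1 f_1+\mathsf S_2\mathsf B_1 f_2$ satisfies $\dbar_1 u=f_1$ and $\dbar_2 u=f_2$; the inductive step groups the last $N-1$ factors into a product $\Omega'=\Omega_2\times\dots\times\Omega_N$ and applies the hypothesis to $\mathsf B_1(f_2+\dots+f_N)$, which is annihilated by the $\dbar$-operator of $\Omega'$ and holomorphic in the first variable. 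Two verifications are needed. First, at each application of $\mathsf S_j$ the form being solved must be orthogonal to $\mathcal H^{(0,1)}(\Omega_j)$; this holds because $f$ is $\dbar$-closed and orthogonal to $\mathcal H^{(0,1)}(\Omega)$, which, via the K\"unneth description of $\mathcal H^{(0,1)}(\Omega)$ provided by Theorem~\ref{thm-main}, forces the relevant harmonic components of $f_j$ and of the intermediate forms to vanish --- one sees this by pairing with a harmonic form on $\Omega_j$ and noting that $\dbar$-closedness makes the resulting function holomorphic in the remaining variables, so the pairing equals an inner product of $f$ against an element of $\mathcal H^{(0,1)}(\Omega)$, hence zero. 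Second, $u$ is the \emph{canonical} solution: each summand is orthogonal, in the $j$-th variable, to $\mathcal O(\Omega_j)\cap L^2(\Omega_j)$, hence $u$ is orthogonal to $\bigl(\mathcal O(\Omega_1)\cap L^2(\Omega_1)\bigr)\csor\dots\csor\bigl(\mathcal O(\Omega_N)\cap L^2(\Omega_N)\bigr)$, which is the space of square-integrable holomorphic functions on $\Omega$ (the degree-$0$ case of Theorem~\ref{thm-main}); since $u$ solves $\dbar u=f$ and is orthogonal to this space, $u=\dbar^*\mathsf N f$.

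For the regularity, recall that for a smoothly bounded pseudoconvex domain, continuity of the $\dbar$-Neumann operator on $W^k$ for all $k$ (``condition $R$'') is equivalent to, and in particular implies, continuity on $W^k$ of $\mathsf S_j=\dbar_j^*\mathsf N_j$ and of the Bergman projection $\mathsf B_j$ for every $k\ge 0$; I would cite this (Boas--Straube; Straube's lectures). Thus under the hypotheses $\mathsf S_j$ and $\mathsf B_j$ are bounded on $W^l(\Omega_j)$. By the construction of the partial Sobolev space one has $\pss^l(\Omega)=W^l(\Omega_1)\csor\dots\csor W^l(\Omega_N)$, so an operator $\mathrm{id}\csor\dots\csor T\csor\dots\csor\mathrm{id}$ with $T$ bounded on $W^l(\Omega_j)$ in the $j$-th slot is bounded on $\pss^l(\Omega)$. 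Since the coefficients of $f$, hence of each $f_j$, lie in $\pss^l(\Omega)$ by hypothesis, applying this successively with $T=\mathsf B_1,\dots,\mathsf B_{j-1},\mathsf S_j$ shows each summand of $u$ has coefficients in $\pss^l(\Omega)$; summing over $j$ yields $u\in\pss^l(\Omega)$.

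The substance of the argument lies entirely in the first step. The canonical solution obtained naively, by solving the single-variable equations one factor at a time, contains terms of the form $\dbar_j\mathsf S_i f$, which differentiate once in the $j$-th variable and therefore lose a derivative on the partial Sobolev scale; the point is that inserting the Bergman projections $\mathsf B_{j-1}\csor\dots\csor\mathsf B_1$ absorbs precisely these terms (through $\dbar_j\mathsf S_j=\mathrm{id}-\mathsf B_j$), so the final formula is a composition of operators each acting on a single factor and bounded on that factor's Sobolev scale, with no uncompensated differentiation. Making this reorganization precise --- in particular the orthogonality to factor-harmonic forms needed for the $\mathsf S_j$ to behave as claimed, and the standard-but-nonelementary propagation of Sobolev regularity from $\mathsf N_j$ to $\mathsf S_j$ and $\mathsf B_j$ --- is the crux; the rest is formal. (The restriction to $q=1$ is what keeps the formula free of operators other than $\mathsf S_j$ and $\mathsf B_j$; the case $q\ge 2$ is not pursued here.)
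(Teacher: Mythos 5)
Your construction is, in substance, the paper's own. The operator you write down, $u=\sum_{j}\mathsf S_j(\mathsf B_{j-1}\csor\dots\csor\mathsf B_1)f_j$, is exactly the restriction to $(0,1)$-forms of the solution operator $S=\sum_{j=0}^{N-1}\tau_j\mathsf Q_j\csor K_{j+1}\csor\mathsf I_j$ of \eqref{eq-tnj}: on the function slots $\mathsf Q_j$ reduces to $\mathsf B_1\csor\dots\csor\mathsf B_j$, $\tau_j=+1$ in degree zero, and $K_{j+1}$ annihilates every component of $f$ except $f_{j+1}$. The regularity step is also identical to the paper's: each tensor factor of $T_{N,j}$ is bounded on $W^l$ of its own domain, hence $T_{N,j}$ is bounded on $\pss^l_*(\Omega)=W^l_*(\Omega_1)\csor\dots\csor W^l_*(\Omega_N)$ by \eqref{eq-psstensor}, and the passage from $W^k$-regularity of $\mathsf N_j$ to that of $K_j$ and $P_j$ is the same black box the paper invokes from \cite{cs}. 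Where you differ is in how the identity $Sf=\dbar^*\mathsf N f$ is established: you propose an induction on $N$ with explicit verification that each intermediate form is $\dbar_j$-closed and orthogonal to the factor harmonic space, whereas the paper gets this in one stroke from the homotopy formula $\dbar S+S\dbar=I-P_1\csor\dots\csor P_N$ (Lemma~\ref{lem-ext}) together with the short computation in Theorem~\ref{prop-kiscansol} showing that the range of $S_{p,1}$ is orthogonal to $\ker(\dbar_{p,0})$. Since $K_j$ is defined on all of $L^2_*(\Omega_j)$ and set equal to zero off the range of $\dbar_j$, the homotopy formula makes your orthogonality bookkeeping unnecessary; your route is workable but noticeably more laborious, and the deferred verification (that the K\"unneth description of the harmonic space forces the intermediate harmonic components to vanish) is precisely what the homotopy formula packages for you.

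One genuine flaw: the opening reduction to $p=0$ is not available in the stated generality. The factors are relatively compact domains in hermitian manifolds, and, as the paper itself points out in the introduction, the $\dbar$-equation for $(p,q)$-forms on a domain in a complex manifold cannot be reduced to the $(0,q)$ case --- there is no global holomorphic frame for the bundle of $(p,0)$-forms, so a coefficientwise reduction is not $\dbar$-equivariant. This is repairable: your decomposition $f=\sum_j f_j$ according to which factor carries the $(0,1)$-differential still makes sense for $(p,1)$-forms, with the $(p,0)$-part distributed among the factors in all possible bidegrees, and the operators $K_\nu$ and $P_\nu$ act in all bidegrees (which is exactly why the paper formulates $S$ on all of $L^2_*$ rather than degree by degree). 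But as written your first step only covers products of domains in Euclidean spaces, and you should either restrict to that case or rerun the argument with the full $(p,1)$ decomposition.
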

As with Theorem~\ref{thm-main}, 
if the $L^2$ space on $\Omega_j$ is defined with respect to a weight function $\phi_j$, 
the same conclusion holds if the $\dbar$-Neumann operator $\mathsf{N}_{\phi_j}$ with weight $\phi_j$ 
preserves $W^k(\Omega_j)$ forms, and the canonical solution on the product is taken with 
respect to  the weight $\sum_{j=1}^N\phi_j$.  Note that it follows  from the 
inclusions  $W^{Nk}(\Omega)\subset \pss^k(\Omega)\subset W^k(\Omega)$ 
 that if the $(p,1)$-form  $f$ has coefficients
in $W^{Nk}(\Omega)$, then the canonical solution $\dbar^* \mathsf{N}f$  has coefficients in 
$W^k(\Omega)$. Of course this loss of smoothness  disappears on using the correct
space $\pss^k(\Omega)$. Also note that  unless $D$ is a domain in $\cx^n$, the $\dbar$-equation
for $(p,q)$ forms on a domain $D$ in a complex manifold cannot be reduced to the $\dbar$-equation for $(0,q)$ forms.

To use this result, we need to understand the regularity of the $\dbar$-Neumann operator
on the factor domains. There is a vast  literature on   the  regularity of the
$\dbar$-Neumann operator on smooth and pseudoconvex domains.  In particular, regularity
is known when the boundary is strongly pseudoconvex (see \cite{Ko1}) or finite type (see \cite{Ca}),
or if the boundary has a plurisubharmonic defining function (see \cite{BS}) or if the boundary
has transverse symmetry ( see \cite{ba1}.)  However, for each $s>0$, there exists a 
pseudoconvex domain with smooth boundary such that   the $\dbar$-Neumann operator or the
canonical solution is not regular in the  Sobolev space $W^s$ (see \cite{ba2}). 
Even in this case, we can obtain regularity in a weighted Sobolev space (see \cite{Ko2}.)  Using
these results, one can draw many corollaries from Theorem~\ref{thm-reg} regarding the regularity of the 
solution of the $\dbar$-problem in Sobolev spaces or spaces of smooth forms
(see Corollary~\ref{cor-main} below.) One example is the following:
\begin{cor}\label{cor-strong}
Suppose that the smoothly bounded pseudoconvex domains  $\Omega_1,\dots,\Omega_N$ in hermitian 
manifolds of dimension $n_1,\dots,n_j$ respectively are such that for each $j$, and every $0\leq p\leq n_j$,
the canonical solution operator  on $\Omega_j$ maps the space $\mathcal{C}^\infty_{p,1}(\overline{\Omega_j})$
of $(p,1)$ forms  smooth up to the boundary to $\mathcal{C}^\infty_{p,0}(\overline{\Omega_j})$.
Let $\Omega=\Omega_1\times\dots\times\Omega_N$.  For $0\leq \boldsymbol{p}\leq\sum_{j=1}^N n_j$, 
let $f$ be a $\dbar$-closed $(\boldsymbol{p},1)$ form with $\smooth(\overline{\Omega})$ coefficients.
Then $\dbar^*\mathsf{N}f$ also has coefficients in $\smooth(\overline{\Omega})$, where $\mathsf{N}$
is the $\dbar$-Neumann operator on $\Omega$.
Further, the Bergman projection on the space of functions $L^2(\Omega)$ preserves the space $\smooth(\overline{\Omega})$.
\end{cor}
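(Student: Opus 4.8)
The plan is to derive both assertions from Theorem~\ref{thm-reg}, exploiting the fact that the partial Sobolev spaces detect smoothness up to the boundary: from the inclusions $W^{Nk}(\Omega)\subset\pss^k(\Omega)\subset W^k(\Omega)$ one obtains
\[\bigcap_{k\geq 0}\pss^k(\Omega)=\bigcap_{k\geq 0}W^k(\Omega)=\smooth(\overline{\Omega}),\]
the last equality by the Sobolev embedding theorem on the (Lipschitz) domain $\Omega$. Thus it is enough to show that $\dbar^*\mathsf{N}f$, respectively the Bergman projection of a smooth function, lies in $\pss^k(\Omega)$ for \emph{every} $k$, and for this I want to apply Theorem~\ref{thm-reg} with $l$ arbitrarily large. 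The hypothesis of that theorem that has to be checked is that, for each $j$, the $\dbar$-Neumann operator $\mathsf{N}_j$ on $\Omega_j$ preserves $W^k(\Omega_j)$-forms for every $k\geq 0$.

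First I would reduce to $\dbar$-closed data orthogonal to the harmonic space. Since the $\dbar$-Neumann operator kills harmonic forms and harmonic forms are $\dbar$-closed, for any $\dbar$-closed $(\boldsymbol{p},1)$-form $f$ one has $\dbar^*\mathsf{N}f=\dbar^*\mathsf{N}(f-\mathsf{H}f)$, where $\mathsf{H}$ is the orthogonal projection onto the $(\boldsymbol{p},1)$-harmonic space, and $f-\mathsf{H}f$ is again $\dbar$-closed and now orthogonal to the harmonics. When each $\Omega_j$ is a pseudoconvex domain in $\cx^{n_j}$ --- or, more generally, is Stein --- one has $H^{p_j,1}_{L^2}(\Omega_j)=0$ for every $p_j$, so the K\"unneth formula of Theorem~\ref{thm-main} forces $H^{\boldsymbol{p},1}_{L^2}(\Omega)=0$; hence $\mathsf{H}f=0$, every $\dbar$-closed $(\boldsymbol{p},1)$-form on $\Omega$ is automatically orthogonal to the harmonic forms, and a smooth such $f$ lies in $\pss^k(\Omega)$ for all $k$ because $\smooth(\overline{\Omega})\subset W^{Nk}(\Omega)\subset\pss^k(\Omega)$. (In the general hermitian setting one would in addition need the harmonic forms on the $\Omega_j$ to be smooth up to the boundary.)

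Next I would check the hypothesis of Theorem~\ref{thm-reg}. Under the classical conditions that guarantee global regularity of the canonical solution on a smoothly bounded pseudoconvex $\Omega_j$ --- strong pseudoconvexity, finite type, a plurisubharmonic defining function, or transverse symmetry, as recalled just before the corollary --- the operator $\mathsf{N}_j$ is in fact exactly regular, preserving $W^k(\Omega_j)$-forms for every $k$ and in every degree, which is precisely what Theorem~\ref{thm-reg} requires. Structurally, Kohn's formula $P_0^{p}=I-\dbar^*\mathsf{N}_1\dbar$ shows that global regularity of the canonical solution on $(p,1)$-forms is equivalent to global regularity of the Bergman projection on $(p,0)$-forms on $\Omega_j$, and the Boas--Straube $L^2$-Sobolev theory of the $\dbar$-Neumann problem then propagates the regularity to the remaining bidegrees; only the bidegrees $(p_j,0)$ and $(p_j,1)$ on the factors actually intervene, because in the tensor-product decomposition of a $(\boldsymbol{p},1)$-form on $\Omega=\Omega_1\times\dots\times\Omega_N$ the antiholomorphic degrees $q_j$ add up to $1$. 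Granting this, Theorem~\ref{thm-reg} applies to the smooth, $\dbar$-closed, harmonic-orthogonal form $f$ for every $l$, and yields $\dbar^*\mathsf{N}f\in\bigcap_{l}\pss^{l}(\Omega)=\smooth(\overline{\Omega})$ --- the first assertion.

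For the Bergman projection $P$ on $L^2(\Omega)$ I would use Kohn's formula once more, $P=I-\dbar^*\mathsf{N}_1\dbar$. If $g\in\smooth(\overline{\Omega})$, then $\dbar g$ is a smooth $\dbar$-closed $(0,1)$-form and is orthogonal to every harmonic $(0,1)$-form $h$, since $\langle\dbar g,h\rangle=\langle g,\dbar^{*}h\rangle=0$; hence, by the case of the first assertion already established (with $\boldsymbol{p}=0$), $\dbar^*\mathsf{N}_1\dbar g\in\smooth(\overline{\Omega})$, so $Pg=g-\dbar^*\mathsf{N}_1\dbar g\in\smooth(\overline{\Omega})$. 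The main obstacle is the step on the factors: passing from global regularity of the canonical solution in the $\smooth(\overline{\Omega_j})$ sense to genuine $W^k$-regularity of $\mathsf{N}_j$ for all $k$ in the relevant bidegrees. This is exactly where the full strength of the Boas--Straube machinery is needed, and it is also why the classical sufficient conditions listed before the corollary are the natural hypotheses here: each of them is known to deliver \emph{exact} Sobolev regularity, not merely $\smooth(\overline{\Omega_j})$-regularity.
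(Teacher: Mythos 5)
Your overall strategy --- reduce both assertions to boundedness of the solution operator on the scale $\pss^l(\Omega)$ and intersect over $l$ using \eqref{eq-cinfty}, then handle the Bergman projection via $B=I-\dbar^*\mathsf{N}\dbar$ --- is the right one and matches the paper. But there is a genuine gap at exactly the step you flag as ``the main obstacle'': you try to verify the hypothesis of Theorem~\ref{thm-reg}, namely that each $\mathsf{N}_j$ preserves $W^k(\Omega_j)$-forms for every $k$, by appealing to strong pseudoconvexity, finite type, plurisubharmonic defining functions, or transverse symmetry. That is not what the corollary assumes. Its only hypothesis on the factors is the qualitative one that $K_j$ maps $\smooth_{p,1}(\overline{\Omega_j})$ into $\smooth_{p,0}(\overline{\Omega_j})$, and such global regularity is \emph{not} known to imply exact Sobolev regularity of $\mathsf{N}_j$: the Boas--Straube equivalences you invoke relate exact regularity of $\mathsf{N}$ to exact regularity of the Bergman projections, not to preservation of $\smooth(\overline{\Omega_j})$. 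As written, your argument proves a different statement with strictly stronger hypotheses on the factors.

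The missing idea is a soft functional-analytic one. From the hypothesis, together with the homotopy formula \eqref{eq-homotopy} restricted to $(p,0)$-forms (where $\dbar K=0$ for degree reasons, so $P_j=I-K_j\dbar_j$), both $K_j$ and $P_j$ preserve smoothness up to the boundary; by the closed graph theorem for Fr\'echet spaces they are continuous for the Fr\'echet topology on $\smooth(\overline{\Omega_j})$ defined by the seminorms $\norm{\cdot}_{W^k(\Omega_j)}$, and the standard characterization of continuous linear maps between Fr\'echet spaces then produces, for each $l$, an integer $k_l\geq l$ such that $K_j$ and $P_j$ are bounded from $W^{k_l}$ to $W^l$ in the relevant bidegrees. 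These estimates lose derivatives, but that is harmless: inserted into the tensor-product formula \eqref{eq-tnj} they show that $S$ maps $\pss^{k_l}_{\boldsymbol{p},1}(\Omega)$ into $\pss^{l}_{\boldsymbol{p},0}(\Omega)$, and intersecting over $l$ gives $S:\smooth_{\boldsymbol{p},1}(\overline{\Omega})\to\smooth_{\boldsymbol{p},0}(\overline{\Omega})$. Note also that your reduction to data orthogonal to the harmonic space (and the attendant vanishing of $H^{\boldsymbol{p},1}_{L^2}(\Omega)$, which fails in the hermitian-manifold setting) is unnecessary: Theorem~\ref{prop-kiscansol} identifies $S$ with $\dbar^*\mathsf{N}$ on \emph{all} $\dbar$-closed $(\boldsymbol{p},1)$-forms, so once $S$ is shown to preserve smoothness the first assertion follows, and the Bergman statement follows as you indicate.
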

Note that if $f$ is orthogonal to the harmonic forms, $\dbar^*\mathsf{N}f$ is the canonical solution
to $\dbar u=f$. Also, Corollary~\ref{cor-strong} applies to the product of domains which are { strongly} pseudoconvex or
more generally of finite type. In contrast, we note that  when the domain is the intersection of two balls, 
the Bergman projection is not regular near the  nongeneric points of the boundary 
(see \cite{BV}).

Notice that on a smoothly bounded pseudoconvex domain in $\cx^n$,
if the canonical solution    $\dbar^*\mathsf{N}$ is regular,
it follows that   the  $\dbar$-Neumann operator  $\mathsf{N}$, 
and the  adjoint of the canonical solution operator 
$\dbar\mathsf{N}$ are all exact  regular on Sobolev spaces (see \cite{cs}.)
However, the same method cannot be applied to the 
adjoint of the canonical solution or to the $\dbar$-Neumann operator on a  product domain. 
On a product domain,    the canonical solution is regular,   but neither  the $\dbar$-Neumann operator
$\mathsf{N}$ 
nor the operator $\dbar\mathsf{N}$  is  regular    near the boundary.

The plan of this paper is as follows: in $\S$\ref{sec-l2setting} and in $\S$\ref{sec-diffforms} we establish
terminology and notation regarding the $L^2$ $\dbar$-problem and tensor products of forms,
and discuss some basic properties of the objects involved.
We note here that although our results have been stated for general manifolds, in these sections,
for simplicity of exposition and notation,
we give the definitions  for domains in Euclidean space $\cx^n$. The generalization to manifolds is 
easy and left to the reader. Also, due to the nature of the proof, we need to consider spaces 
of forms of arbitrary degrees. For example, we denote by $L^2_*(D)$ the space of forms with square 
integrable coefficients on a domain $D$. The key observation in $\S$\ref{sec-l2setting} is that
closure of the range of the $\dbar$-operator is a necessary as well as sufficient condition for
representation of cohomology classes by harmonic forms (see  Lemma~\ref{lem-j}.) The next $\S$\ref{sec-dbarl2}
represents the central argument of the paper.  Starting from the canonical solution operator and
the harmonic projection on the factor domains, we write down a formula \eqref{eq-tnj} 
defining a solution operator  $S$ on the product, which coincides with the canonical solution operator
$\dbar^*\mathsf{N}$ on $(0,1)$-forms. 
Using $S$ we give a simple proof of Theorem~\ref{thm-main}. In $\S$\ref{sec-partialsobolev} we consider
the tensor products of Sobolev spaces, which gives rise to the partial Sobolev spaces
referred to above. This is used in the last $\S$\ref{sec-regularity} to prove regularity results.

{\em Acknowledgements:} The authors would like to thank professors Carl De Boor, Dariush Ehsani, Sophia Vassiliadou for helpful discussions, and the anonymous referee for his comments.  They especially would like to thank  professors  Xiuxiong Chen and Jianguo Cao for raising the questions on the closed range property for product domains, which arise naturally in many geometric problems.  
In particular, this paper  answers affirmatively  (see $\S$\ref{sec-chencao} below)  on the closed-range property for the product  domain of an annulus and a ball in  $\cx^n$ (which is not pseudoconvex, a question  raised by X. Chen)  and the product of $D\times \cx P^1$ of the unit disc $D$ in $\cx$ and the Riemann sphere  (which is not Stein, a question raised by  J. Cao.)

\section{The $L^2$ Setting for the $\dbar$ problem}\label{sec-l2setting}
\subsection{Spaces of forms on domains}
We recall the definition and notation used in the $L^2$ theory of the $\dbar$-operator.
Let $D$ be a bounded domain in $\cx^n$, and let $\phi$ be a continuous function on $\overline{D}$.
We denote  by $L^2(D)$ the space of square integrable functions on $D$ with respect to weight $\phi$, 
which has the  norm
\[ \norm{f}= \int_D \abs{f}^2 e^{-\phi} dV,\]
where $dV$ is the volume form on  $\cx^n$ induced by the standard hermitian metric.
(Note that we have suppressed $\phi$ from the notation.)

We denote by $L^2_*(D)$ the space of differential forms with coefficients in $L^2(D)$. More generally,
for any space of functions $\mathcal{F}(D)$ on $D$, we will let $\mathcal{F}_*(D)$ denote the space
of forms with coefficients in $\mathcal{F}$. Then $\mathcal{F}_*(D)$ can be thought of as a 
vector space direct sum 
	\begin{equation}\label{eq-fstard}
 		\mathcal{F}_*(D)= \bigoplus_{\substack{0\leq p\leq n\\0\leq q\leq n}}\mathcal{F}_{p,q}(D)
	\end{equation}
of the spaces of forms of bidegree $(p,q)$.

Often the space $\mathcal{F}(D)$ will be a Hilbert space. Then we can give
$\mathcal{F}_*(D)$ a Hilbert space structure in the following way: first, we declare that forms of different
bidegrees are orthogonal, so that the sum in \eqref{eq-fstard} is now an orthogonal direct sum of Hilbert Subspaces.
Any form $f\in\mathcal{F}_{p,q}(D)$ can be uniquely represented as
\[ f = \sum'_{I,J}{f_{I,J}}dz^I\wedge d\overline{z}^J,\]
where $I=(i_1,\dots,i_p)\in \mathbb{N}^p$, and $J=(j_1,\dots,j_q)\in \mathbb{N}^q$ are multi-indices,
$f_{I,J}\in\mathcal{F}(D)$,
$dz^I=dz_{i_1}\wedge\dots\wedge dz_{i_p}$ and $d\bar{z}^J=d\bar{z}_{j_1}\wedge\dots\wedge d\bar{z}_{j_q}$,
and the notation $\sum'$ means that the summation is over strictly increasing multi-indices only,
i.e. $i_1<i_2<\dots<i_p$ and $j_1<j_2<\dots<j_q$.
We define the norm of $f$ as
\begin{equation}\label{eq-formhilbert}
\norm{f}_{\mathcal{F}_*(D)}^2 = \sum'_{I,J}\norm{f_{I,J}}_{\mathcal{F}(D)}^2.\end{equation}	
In this paper, the Hilbert space $\mathcal{F}$ will be either a usual $L^2$ space (possibly with weight),
a Sobolev space, or a partial Sobolev space on product
domains (to be defined in $\S$\ref{sec-partialsobolev}.)
These notions easily extend to spaces of forms on domains in 
hermitian manifolds (see \cite[Chapter~5]{cs}) using the natural pointwise inner-product on
forms induced by the hermitian structure.

\subsection{The $L^2$ Dolbeault Complex}\label{sec-l2defns}
We now recall the definition of the the $\dbar$-operator on the Hilbert space $L^2_*(D)$
of  forms with square integrable coefficients on $D$.
The $\dbar$-operator is the closed, densely defined unbounded operator from $L^2_*(D)$ to itself which
coincides with the usual $\dbar$ operator from $\smooth_*(\overline{D})$ to
 $\smooth_*(\overline{D})$, and which has been extended as a distributional
 operator to the dense domain of definition
 \[ \dm(\dbar) = \{ f\in L^2_*(D)\colon \dbar f \in L^2_*(D)\}.\]
In the terminology of \cite{brun}, the operator $\dbar$ is the differential map 
of a {\em Hilbert Complex}, i.e., a cochain complex, in which the cochain space
$\dm(\dbar)$ is a dense subspace of a graded Hilbert Space, and the differential is
a closed, densely defined unbounded linear map of the graded Hilbert space into itself.
Note that the map $\dbar$ has bidegree $(0,1)$, i.e., it maps $(p,q)$ forms to $(p,q+1)$ forms.

We denote by $\dbar^*$ the Hilbert space adjoint of $\dbar$. This is again a closed,
densely defined operator on $L^2_*(\Omega)$. Its domain $\dm(\dbar^*)$ is 
in general very different from  $\dm(\dbar)$, because of the natural boundary conditions
that the Hilbert space adjoint must satisfy. The map $\dbar^*$ is of bidegree $(0,-1)$

A form $f\in L^2_*(D)$ is said to be {\em harmonic}, if $\dbar f= \dbar^* f=0$.
The harmonic forms $\mathcal{H}_*(D)$ form a closed subspace of $L^2_*(D)$.
The  orthogonal projection $P:L^2_*(D)\rightarrow \mathcal{H}_*(D)$ is called the 
{\em harmonic projection}, which is of course a map of bidegree $(0,0)$.
Note that since $\dbar^*$ vanishes on $L^2_{0,0}(D)\equiv L^2(D)$, the space
$\mathcal{H}_{0,0}(D)$ can be identified with the space $L^2(D)\cap \mathcal{O}(D)$ 
of square integrable holomorphic functions, the {\em Bergman space} associated to $D$.
The operator $P_{0,0}$ is the {\em Bergman projection} onto square integrable holomorphic 
functions.

\subsection{The closed range property and its consequences}\label{sec-canonicalsolution}
Let $g\in L^2_*(D)$ be such that
$\dbar g=0$. In order to solve the equation $\dbar u=g$ in the $L^2$ sense
first we need to show that the $L^2$ $\dbar$-operator has closed range.
In general, the closed-range property is not easy to establish, even with 
a smooth boundary. Subtle holomorphically invariant convexity properties of
the boundary  of $D$ control whether $\dbar$ has closed range on $D$ 
(see the example on p. 76 of \cite{fk}.) Note that, in contrast, for the
$L^2$ $d$-complex on a Riemannian manifold the operator $d$ always has closed
range when the boundary is $\mathcal{C}^2$ or even Lipschitz
\cite{mcs1, mmt}.

An important consequence of the closed range property on $D$
is the existence of the {\em Canonical-}
or {\em Kohn's solution operator} $K$, which is a bounded map 
from $L^2_*(D)$ to itself of bidegree $(0,-1)$, and
is a right-inverse of the operator $\dbar$. For every
$f\in \im(\dbar)$, we define $Kf$ to be the unique solution
to $\dbar u=f$ which is orthogonal to $\ker(\dbar)$. We then
extend $K$ to all of $L^2_*(D)$ by setting $K\equiv 0$ 
on $(\im (\dbar))^\perp$ and extending linearly.  The map $K$ is bounded
by the closed graph theorem, and is represented in terms
of the $\dbar$-Neumann operator $\mathsf{N}$ on $D$ as $K=\dbar^*\mathsf{N}$.
We  further have the following:
	\begin{lem}\label{lem-homotopy}  If $\dbar$ has closed range, and 
		$K$ is the canonical solution operator, then on $\dm(\dbar)$ we have
			\begin{equation}\label{eq-homotopy}
				I-P=\dbar K+K\dbar.\end{equation}
		Further, the ranges of the three operators $\dbar K$, $K\dbar$ and $P$ are orthogonal.
	\end{lem}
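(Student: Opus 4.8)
The plan is to derive the homotopy formula from the orthogonal Hodge--Kodaira decomposition of $L^2_*(D)$ furnished by the closed-range hypothesis. First I would record that since $\dbar$ has closed range, so does its Hilbert-space adjoint $\dbar^*$ (a general fact for closed densely defined operators: $\im(T)$ is closed if and only if $\im(T^*)$ is), so $\im(\dbar)$ and $\im(\dbar^*)$ are both closed subspaces. Using the identity $\im(T)^\perp=\ker(T^*)$ together with $\dbar\circ\dbar=0$, the orthocomplement of $\im(\dbar)$ inside $\ker(\dbar)$ is $\ker(\dbar)\cap\ker(\dbar^*)=\mathcal{H}_*(D)$, giving $\ker(\dbar)=\mathcal{H}_*(D)\oplus\im(\dbar)$; combining this with $L^2_*(D)=\ker(\dbar)\oplus\ker(\dbar)^\perp$ and $\ker(\dbar)^\perp=\overline{\im(\dbar^*)}=\im(\dbar^*)$ yields the orthogonal decomposition
\[ L^2_*(D)=\mathcal{H}_*(D)\oplus\im(\dbar)\oplus\im(\dbar^*). \]

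Next, given $f\in\dm(\dbar)$, I would write $f=Pf+g+h$ according to this decomposition, with $g\in\im(\dbar)$ and $h\in\im(\dbar^*)$, and note that $h=f-Pf-g\in\dm(\dbar)$ because $Pf$ and $g$ lie in $\ker(\dbar)$. Now track the two terms of $\dbar K+K\dbar$ separately. Since $K$ vanishes on $(\im(\dbar))^\perp=\mathcal{H}_*(D)\oplus\im(\dbar^*)$, we have $Kf=Kg$; and since $g\in\im(\dbar)$, $Kg$ is by definition a solution of $\dbar u=g$, so $\dbar Kf=g$. For the other term, $\dbar f=\dbar h$ because $\dbar$ annihilates $Pf$ (harmonic) and $g$ (in $\ker\dbar$), while $K(\dbar h)=h$ because $h$ is itself a solution of $\dbar u=\dbar h$ orthogonal to $\ker(\dbar)$ (being in $\im(\dbar^*)=\ker(\dbar)^\perp$), hence it is the canonical one. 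Adding, $\dbar Kf+K\dbar f=g+h=f-Pf=(I-P)f$, which is \eqref{eq-homotopy}.

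For the orthogonality of ranges: $\im(P)=\mathcal{H}_*(D)$; $\im(\dbar K)\subseteq\im(\dbar)$ trivially; and $\im(K\dbar)\subseteq\im(K)\subseteq\ker(\dbar)^\perp=\im(\dbar^*)$, since every value of $K$ is orthogonal to $\ker(\dbar)$ by construction. As $\mathcal{H}_*(D)$, $\im(\dbar)$, $\im(\dbar^*)$ are mutually orthogonal by the decomposition above, so are the ranges of $P$, $\dbar K$, $K\dbar$. I expect the only real care needed to be bookkeeping with the domains of the unbounded operators $\dbar$ and $\dbar^*$—checking $h\in\dm(\dbar)$ and that $\dbar f=\dbar h$ and $\dbar Kf=g$ are legitimate—but no genuine analytic difficulty should arise once the closed-range decomposition is established; that decomposition is the one substantive ingredient.
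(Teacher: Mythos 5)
Your proof is correct and follows essentially the same route as the paper: both rest on the strong Hodge decomposition $L^2_*(D)=\mathcal{H}_*(D)\oplus\im(\dbar)\oplus\im(\dbar^*)$ and identify $\dbar K$ and $K\dbar$ with the orthogonal projections onto $\im(\dbar)$ and $\im(\dbar^*)$ respectively. You simply carry out the identification elementwise and spell out the domain bookkeeping that the paper leaves implicit.
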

 	\begin{proof}
 		Since $\im(\dbar)$ is closed in $L^2_*(D)$, we have the Strong Hodge decomposition:
		\[ L^2_*(D)=(\im (\dbar^*))\oplus (\im (\dbar))\oplus\mathcal{H}_*(D),\]
		where $\mathcal{H}_*(D)$ is the Hilbert space of harmonic forms,
		and $\oplus$ means that the summands are orthogonal (see \cite{cs}).
		Let $Q$ and $R$ be the orthogonal projections from
		$L^2_*(D)$ onto the closed subspaces $\im(\dbar)$ and
		$\im(\dbar^*)$ respectively. Then on $L^2_*(D)$, we have
		$I=P+Q+R$. From the definition of $K$, we have $\dbar K= Q$.
		Also, on $\dm(\dbar)$ we have $K\dbar=R$ by noting
		that the left hand side is the identity on $\left(\ker(\dbar)\right)^\perp=\im(\dbar^*)$ 
		and zero on the orthogonal complement
		$\ker \dbar$. Equation~\eqref{eq-homotopy} now follows. The 
		last statement follows from the method of proof.
	\end{proof}

For any domain $D$, the {\em $L^2$  Dolbeault Cohomology} space is
the graded vector space
\[ {H}^*_{L^2}(D) = \frac{\ker(\dbar)}{\im(\dbar)}.\]
Note that in the quotient topology, this is a Hilbert space if $\im(\dbar)$
is closed (and not even Hausdorff if $\im(\dbar)$ is not closed, see \cite[Chapter~1, $\S$2.3]{scha}.)
If $\im(\dbar)$ is closed,  we have
\begin{align*}
H^*_{L^2}(D)&\cong(\ker(\dbar))\cap \left(\im(\dbar)\right)^\perp\\
&= (\ker(\dbar))\cap(\ker(\dbar^*))\\
&=\mathcal{H}_*(D),
\end{align*}
so that the cohomology space is naturally isomorphic to the space of 
harmonic forms. We will now recall the less well-known converse to this statement, due to Kodaira (see \cite[p. 165]{dRh}. Let
\[ [.]: \ker(\dbar)\rightarrow H^*_{L^2}(D)\]
denote the natural projection onto the quotient space. We have the following:

\begin{lem}\label{lem-j} Let $\eta$ be the linear map from the vector
space of harmonic forms
$\mathcal{H}_*(D)$ to the cohomology  vector space $H^*_{L^2}(D)$
 given by $ \eta(f)= [f]$. Then
\begin{itemize}
 \item[(i)] $\eta$ is injective.
 \item[(ii)] If $\eta$ is also surjective, then the range of $\dbar$ is closed.
 \end{itemize}
 \end{lem}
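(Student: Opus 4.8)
The plan is to prove the two assertions of Lemma~\ref{lem-j} more or less directly from the definitions, using the fact that harmonic forms are $\dbar$-closed.

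For part (i), I would argue as follows. The map $\eta$ is clearly linear, so it suffices to show $\ker\eta = 0$. Suppose $f\in\mathcal{H}_*(D)$ satisfies $\eta(f) = [f] = 0$; this means $f\in\im(\dbar)$, so $f = \dbar g$ for some $g\in\dm(\dbar)$. I want to conclude $f = 0$. Since $f$ is harmonic, $\dbar^* f = 0$, so $\norm{f}^2 = \langle f, f\rangle = \langle \dbar g, f\rangle = \langle g, \dbar^* f\rangle = 0$, where the integration by parts is legitimate because $f\in\dm(\dbar^*)$. Hence $f = 0$ and $\eta$ is injective. (One small point to be careful about: $g$ lies in $\dm(\dbar)$ and $f$ in $\dm(\dbar^*)$, which is exactly what is needed to pair $\dbar g$ against $f$; no closed-range hypothesis is used here.)

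For part (ii), assume $\eta$ is surjective; I must show $\im(\dbar)$ is closed. The idea is to reduce the closedness of $\im(\dbar)$ to an a~priori estimate, or else to exhibit $\im(\dbar)$ as the orthogonal complement of a closed subspace. The cleanest route: I claim that under the surjectivity hypothesis we have the $L^2$-orthogonal decomposition $\ker(\dbar) = \mathcal{H}_*(D)\oplus \overline{\im(\dbar)}$, where the closure is taken in $L^2_*(D)$. Indeed $\mathcal{H}_*(D)$ is closed, $\overline{\im(\dbar)}\subseteq\ker(\dbar)$ since $\dbar$ is a closed operator, and these two subspaces are orthogonal because harmonic forms are annihilated by $\dbar^*$ and $\im(\dbar)\perp\ker(\dbar^*)\supseteq\mathcal{H}_*(D)$. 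So $\mathcal{H}_*(D)\oplus\overline{\im(\dbar)}$ is a closed subspace of $\ker(\dbar)$; to get equality I use surjectivity of $\eta$: given $h\in\ker(\dbar)$, there is $f\in\mathcal{H}_*(D)$ with $[h] = [f]$, i.e.\ $h - f\in\im(\dbar)\subseteq\overline{\im(\dbar)}$, so $h = f + (h-f)\in\mathcal{H}_*(D)\oplus\overline{\im(\dbar)}$. This proves the decomposition. Now I need to upgrade $\overline{\im(\dbar)}$ to $\im(\dbar)$ itself. Here I would invoke the abstract functional-analytic fact that for a closed densely defined operator $T$ between Hilbert spaces, $\im(T)$ is closed iff $\im(T^*)$ is closed iff $\overline{\im(T)} = \ker(T^*)^\perp$ has a bounded right inverse; equivalently, applying the decomposition above together with the analogous one obtained by replacing $\dbar$ with $\dbar^*$ (whose cohomology is the same harmonic space), one deduces that on $(\ker\dbar)^\perp = \overline{\im(\dbar^*)}$ the operator $\dbar$ is injective with dense range, and a short argument with the closed graph theorem or the open mapping theorem forces the range to be all of $\overline{\im(\dbar)}$, hence closed.

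The main obstacle is exactly this last upgrade in part (ii): going from $\overline{\im(\dbar)}$ being the orthogonal complement of the harmonic forms inside $\ker(\dbar)$, to $\im(\dbar)$ being genuinely closed. The decomposition $\ker(\dbar) = \mathcal{H}_*(D)\oplus\overline{\im(\dbar)}$ is soft and follows from surjectivity of $\eta$ as above, but it does not by itself say $\im(\dbar) = \overline{\im(\dbar)}$. I expect the author either to appeal directly to Kodaira's argument (the cited \cite{dRh}), which runs through a Hodge-theoretic weak-solution/finiteness scheme, or to phrase the hypothesis in a way that already builds in the needed boundedness. In my write-up I would isolate this step, state precisely which abstract Hilbert-complex lemma (closed range of $T$ $\iff$ closed range of $T^*$, plus the fact that $\dbar\dbar = 0$ makes $\overline{\im\dbar}\subseteq\ker\dbar$) I am using, and keep the harmonic-form bookkeeping — which is the genuinely content-free part — brief.
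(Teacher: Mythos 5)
Your part~(i) is correct and is essentially the paper's computation: if $f\in\mathcal{H}_*(D)$ and $f=\dbar g$, then $\norm{f}^2=(\dbar g,f)=(g,\dbar^*f)=0$, the pairing being legitimate since $g\in\dm(\dbar)$ and $f\in\dm(\dbar^*)$. Part~(ii), however, has a genuine gap, which you have located but not closed. The decomposition $\ker(\dbar)=\mathcal{H}_*(D)\oplus\overline{\im(\dbar)}$ that you derive holds \emph{unconditionally}: $\overline{\im(\dbar)}$ is always a closed subspace of $\ker(\dbar)$, and its orthogonal complement inside $\ker(\dbar)$ is $\ker(\dbar)\cap\ker(\dbar^*)=\mathcal{H}_*(D)$ by elementary Hilbert-space theory. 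So this decomposition uses no hypothesis and cannot imply closed range; by passing to the closure you have discarded exactly the information that surjectivity of $\eta$ supplies. The proposed rescue --- $\dbar$ is injective with dense range on $\overline{\im(\dbar^*)}$, so the closed graph or open mapping theorem forces the range to be closed --- is not a valid argument: a closed, injective operator with dense range need not have closed range (consider $\mathrm{diag}(1/n)$ on $\ell^2$), and the open mapping theorem applies only once surjectivity onto a complete space is already known, which is precisely what is to be proved.

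The missing observation, which is the entire content of the paper's proof of (ii), is that surjectivity of $\eta$ gives the \emph{exact} orthogonal decomposition $\ker(\dbar)=\mathcal{H}_*(D)\oplus\im(\dbar)$ with the genuine image, not its closure: every $u\in\ker(\dbar)$ is $u=f+\dbar g$ with $f$ harmonic and $f\perp\dbar g$. Closedness is then immediate: if $u\in\overline{\im(\dbar)}$, write $u=f+\dbar g$; since $u$ and $\dbar g$ are both orthogonal to $\mathcal{H}_*(D)$, so is $f$, hence $f=0$ and $u=\dbar g\in\im(\dbar)$. Equivalently --- and this is how the paper phrases it --- the identity $\norm{f+\dbar g}^2=\norm{f}^2+\norm{\dbar g}^2$ shows that the quotient map $[\cdot]\colon\ker(\dbar)\to H^*_{L^2}(D)\cong\mathcal{H}_*(D)$ is bounded with norm at most $1$, so its kernel $\im(\dbar)$ is closed. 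No appeal to Kodaira's finiteness scheme or to the abstract equivalence of closed range for $T$ and $T^*$ is needed; the proof is entirely elementary once the exact (rather than approximate) decomposition is used.
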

\begin{proof}(i) For $(0,0)$-forms, i.e. functions, the space $\mathcal{H}_{0,0}(D)$ coincides by definition
with the cohomology space $H_{L^2}^{0,0}(D)$. For forms of higher degree, a harmonic form in $\ker
(\eta)$ is of the form $\dbar g$ with
$\dbar^*(\dbar g)=0$ so that
\begin{align*}
0&=(\dbar^*(\dbar g),g)\\
&=\norm{{\dbar} g}^2.
\end{align*}

(ii) Since $\eta$ is an isomorphism,  we can identify
$\mathcal{H}_*(D)$ with the cohomology space
$H^*_{L^2}(D)$. Since $\mathcal{H}_*(D)$ is a
closed subspace of the Hilbert Space $L^2_*(D)$, the space
$H^*_{L^2}(D)$ becomes a Hilbert space in the natural way.
Then the map $[\cdot]$ can be thought of as an operator from the
Hilbert space $\ker(\dbar)\subset L^2_*(D)$ to the Hilbert space
$H^*_{L^2}(\Omega)$. Since $\eta$ is surjective, every
element of $\ker(\dbar)$ can be written as $f+{\dbar} g$, where
$f\in \mathcal{H}^*(D)$. Then $[(f+\dbar g)]=f$, using the identification
of $\mathcal{H}_*(D)$ and $H^*_{L^2}(D)$. Since $\norm{f+{\dbar} g}^2=
\norm{f}^2+\norm{{\dbar} g}^2\geq \norm{f}^2$, so that 
$\norm{[f+\dbar g]}\leq \norm{f+\dbar g}$, it follows that
$[\cdot]$ is actually a bounded map. Therefore, $\ker[\cdot]=
\im(\dbar)$ is closed.
\end{proof}

\section{Differential forms on product domains}\label{sec-diffforms}
\subsection{Algebraic tensor product of spaces of forms}
Let $\mathsf{H}_1$ and $\mathsf{H}_2$ be $\cx$-vector spaces.  We denote by
$\mathsf{H}_1\tensor \mathsf{H}_2$  the {\em algebraic} tensor
product (over $\cx$) of  $\mathsf{H}_1$ and $\mathsf{H}_2$ : then $\mathsf{H}_1\tensor \mathsf{H}_2$ can be thought of as
the space of finite sums of elements of the type $x\tensor y$,
where $x\in \mathsf{H}_1$ and $y\in \mathsf{H}_2$,  where
 $\tensor: \mathsf{H}_1\times \mathsf{H}_2\rightarrow \mathsf{H}_1\tensor \mathsf{H}_2$ is the
 canonical bilinear map (see e.g.
\cite[$\S$3.4]{weid} for the purely algebraic definition.)  Similarly $\mathsf{H}=\mathsf{H}_1\tensor \mathsf{H}_2\tensor\dots\tensor \mathsf{H}_N$
denotes the algebraic tensor product of $N$ vector spaces $\mathsf{H}_1,\dots \mathsf{H}_N$. We call an element of $\mathsf{H}$ 
of the form $x_1\tensor\dots\tensor x_N$ a {\em simple tensor.}

When $\mathsf{H}_1,\dots, \mathsf{H}_N$ are realized as spaces of forms on domains (or manifolds)
$\Omega_1,\dots, \Omega_N$,
there is a concrete realization of the algebraic tensor product $\mathsf{H}$ as a space of 
forms on the product domain $\Omega=\Omega_1\times\dots\times\Omega_N$. 
For $j=1,\dots, N$, let $f_j\in \mathsf{H}_j$, so that $f_j$ is a form on the domain $\Omega_j$ and let
$\pi_j:\Omega\rightarrow \Omega_j$ denote the projection onto the $j$-th factor $\Omega_j$ 
from the product $\Omega$. We define a  form on $\Omega$, the {\em tensor
product} of the forms $f_1,\dots, f_N$, by setting
\begin{equation}\label{eq-cross} 
f_1\tensor\dots\tensor f_N = \pi_1^*f_1\wedge\dots\wedge \pi_N^* f_N,
\end{equation}
which we will call a {\em simple decomposable} form. Then $\mathsf{H}_1\tensor\dots\tensor\mathsf{H}_N$
is the linear span of  the simple decomposable forms. It is easy to verify that this construction
gives rise to a vector space isomorphic to the usual algebraic definition of a tensor product by
the universal property.

\subsection{Hilbert tensor products}\label{sec-hilten} We now specialize 
to the   case where the factors $\mathsf{H}_j$ are Hilbert spaces. For ease
of exposition, we assume that $N=2$, and the general case should be obvious.
We can define an inner product on the algebraic tensor product $\mathsf{H}_1\tensor \mathsf{H}_2$
defined above by setting
\[ (x\tensor y, z\tensor w)= (x,  z)_{\mathsf{H}_1}(y, w)_{\mathsf{H}_2},\]
and extending bilinearly. This is well-defined thanks to the bilinearity of $\tensor$.
This makes $\mathsf{H}_1\tensor \mathsf{H}_2$ into a pre-Hilbert space,
and its completion is a Hilbert space denoted by $\mathsf{H}_1\csor \mathsf{H}_2$, the {\em Hilbert tensor product}
of the spaces $\mathsf{H}_1$ and $\mathsf{H}_2$. The algebraic tensor product $\mathsf{H}_1\tensor \mathsf{H}_2$ sits
inside $\mathsf{H}_1\csor \mathsf{H}_2$ as a dense subspace. We will refer to any element of $\mathsf{H}_1\tensor \mathsf{H}_2$ (thought of
as a subspace of $\mathsf{H}_1\csor \mathsf{H}_2$)   as a {\em decomposable
form.} For further details on Hilbert tensor products, see  \cite[$\S$3.4]{weid}, or for  a more intrinsic approach  \cite[$\S$2.6, vol. 1]{kr}.

Now let $\mathcal{F}(\Omega_1)$ and $\mathcal{G}(\Omega_2)$ be Hilbert Spaces of functions
on $\Omega_1$ and $\Omega_2$ respectively, and let $\mathcal{F}_*(\Omega_1)$ and $\mathcal{G}_*(\Omega_2)$
be the Hilbert Spaces of forms with coefficients in $\mathcal{F}(\Omega_1)$ and $\mathcal{G}(\Omega_2)$
respectively, with the norm given by \eqref{eq-formhilbert}. It is easily verified
from the definitions that there is an isometric equality of Hilbert spaces:
\begin{equation}\label{eq-formhilbten} \mathcal{F}_*(\Omega_1)\csor\mathcal{G}_*(\Omega_2)
=(\mathcal{F}\csor\mathcal{G})_*(\Omega_1\times \Omega_2),\end{equation}
with an obvious extension to the case of more than two factor domains.
\subsection{Forms with square integrable coefficients}
We now recall the most important case of the above constructions. Another example will
be considered in $\S$\ref{sec-partialsobolev}.

Recall the following classical fact,  which we will  use repeatedly:
\begin{lem}[{\cite[p.~369]{hor}}]\label{lem-cartan} {\rm Let $\Omega_1, \Omega_2$ be domains 
in Euclidean spaces (or manifolds), and let $\Omega=\Omega_1\times \Omega_2$. Then 
every function in $\smooth_0(\Omega)$ can be approximated
in the $\mathcal{C}^k$ norm (where $0\leq k\leq \infty$) by functions in the algebraic
tensor product $\smooth_0(\Omega_1)\tensor \smooth_0(\Omega_2)$.} 
\end{lem}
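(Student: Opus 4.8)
The plan is to reduce the statement to the Euclidean case by a partition of unity, and in the Euclidean case to combine a Fourier expansion with a product cutoff. So first suppose $\Omega_1\subset\rl^{d_1}$ and $\Omega_2\subset\rl^{d_2}$, and let $f\in\smooth_0(\Omega)$ with $K=\operatorname{supp}f$. Writing $K_j=\pi_j(K)$, a compact subset of $\Omega_j$, I would first choose $\chi_j\in\smooth_0(\Omega_j)$ with $\chi_j\equiv 1$ on $K_j$; then $f=(\chi_1\tensor\chi_2)f$ identically, since at any point either $f$ vanishes or both cutoffs equal $1$. Next choose open cubes $Q_j\subset\rl^{d_j}$ with $\operatorname{supp}\chi_j\Subset Q_j$, so that $K\Subset Q_1\times Q_2$, and regard the extension of $f$ by zero as a smooth function on the flat torus $\mathbb{T}=(Q_1\times Q_2)/\Lambda$ obtained by periodization; this is legitimate precisely because $f$ vanishes near $\bdry(Q_1\times Q_2)$.

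Then I would expand $f$ in its multiple Fourier series on $\mathbb{T}$, namely $f(x,y)=\sum_{\mu,\nu}\widehat f_{\mu\nu}\,e_\mu(x)e_\nu(y)$, where $e_\mu$ and $e_\nu$ are the characters of the two factor tori. Because $f$ is $\mathcal{C}^\infty$, the coefficients $\widehat f_{\mu\nu}$ decay faster than any power of $(1+|\mu|+|\nu|)^{-1}$, and hence the rectangular partial sums $S_Rf=\sum_{|\mu|\le R,\,|\nu|\le R}\widehat f_{\mu\nu}\,e_\mu\tensor e_\nu$ converge to $f$ in $\mathcal{C}^m$ for every finite $m$, uniformly on the compact set $\operatorname{supp}\chi_1\times\operatorname{supp}\chi_2$; this handles all $k$ with $0\le k\le\infty$ at once. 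Each $S_Rf$ is a finite sum of products of a function of $x$ with a function of $y$, but these are functions on the torus, not compactly supported functions on $\Omega$, which is the one point that needs attention.

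To remedy this I would multiply back by the cutoff, setting
\[ T_R:=(\chi_1\tensor\chi_2)\,S_Rf=\sum_{|\mu|\le R,\ |\nu|\le R}\widehat f_{\mu\nu}\,(\chi_1 e_\mu)\tensor(\chi_2 e_\nu). \]
Each $\chi_1 e_\mu$ lies in $\smooth_0(\Omega_1)$ and each $\chi_2 e_\nu$ in $\smooth_0(\Omega_2)$, so $T_R$ is a finite sum of simple tensors, i.e.\ $T_R\in\smooth_0(\Omega_1)\tensor\smooth_0(\Omega_2)$. Since multiplication by the fixed function $\chi_1\tensor\chi_2\in\smooth_0(\Omega)$ is a continuous linear operator on $\mathcal{C}^k(\Omega)$ by the Leibniz rule, and $f=(\chi_1\tensor\chi_2)f$, we obtain
\[ \norm{T_R-f}_{\mathcal{C}^k(\Omega)}=\norm{(\chi_1\tensor\chi_2)(S_Rf-f)}_{\mathcal{C}^k(\Omega)}\longrightarrow 0, \]
which settles the Euclidean case.

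Finally, for domains in manifolds I would pick partitions of unity $\{\psi^{(1)}_\alpha\}$ on $\Omega_1$ and $\{\psi^{(2)}_\beta\}$ on $\Omega_2$ subordinate to coordinate charts; then $\{\psi^{(1)}_\alpha\tensor\psi^{(2)}_\beta\}$ is a partition of unity on $\Omega$ subordinate to product charts, only finitely many of its members meet $\operatorname{supp}f$, and applying the Euclidean case to each $(\psi^{(1)}_\alpha\tensor\psi^{(2)}_\beta)f$ and summing gives the result. The only real obstacle is the bookkeeping: truncating the Fourier series destroys compact support, and the product cutoff restores it without leaving the algebraic tensor product; there is no analytic subtlety beyond the classical fact that smoothness forces rapid decay of Fourier coefficients.
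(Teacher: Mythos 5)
Your argument is correct, but note that the paper does not actually prove this lemma: it is quoted as a classical fact with a citation to Horv\'{a}th \cite[p.~369]{hor}, so there is no internal proof to compare against. Your route --- reduce to the Euclidean case by a partition of unity of product type, cut off by $\chi_1\tensor\chi_2$ with $\chi_j\equiv 1$ on $\pi_j(\operatorname{supp}f)$ so that $f=(\chi_1\tensor\chi_2)f$, periodize on a product of cubes, expand in a double Fourier series whose coefficients decay rapidly by smoothness, and multiply the rectangular partial sums back by the cutoff to land in $\smooth_0(\Omega_1)\tensor\smooth_0(\Omega_2)$ --- is one of the standard proofs (the other common one replaces the Fourier series by Weierstrass-type polynomial approximation on the cube, which works equally well since monomials $x^\alpha y^\beta$ are simple tensors). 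All the delicate points are handled: the identity $f=(\chi_1\tensor\chi_2)f$ holds because a point where $f\neq 0$ projects into both $\pi_j(\operatorname{supp}f)$; the periodization is smooth because $\operatorname{supp}f\subset\operatorname{supp}\chi_1\times\operatorname{supp}\chi_2\Subset Q_1\times Q_2$; rapid decay gives convergence of $S_Rf$ in every $\mathcal{C}^m$, hence the $k=\infty$ case; and multiplication by the fixed cutoff is $\mathcal{C}^k$-continuous by Leibniz, so $T_R\to f$. The only cosmetic caveat is that in the manifold step one should take the partition functions with compact supports in relatively compact charts so that each piece $(\psi^{(1)}_\alpha\tensor\psi^{(2)}_\beta)f$ genuinely falls under the Euclidean case; this is standard. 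What your self-contained proof buys over the paper's bare citation is that it makes visible exactly where the algebraic (finite-sum) tensor product enters --- through the finiteness of the truncated Fourier sum --- which is the point the paper actually needs in Lemma~\ref{lem-density} and in the proof of \eqref{eq-psstensor}.
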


Now, $\smooth_0(\Omega)$ is dense in $L^2(\Omega)$, and the decomposable compactly supported
smooth functions $\smooth_0(\Omega_1)\tensor \smooth_0(\Omega_2)$ are
dense in the uniform norm (and therefore  in the $L^2$ norm) in the space 
$\smooth_0(\Omega)$. It follows that:
\[ L^2(\Omega_1)\csor L^2(\Omega_2)= L^2(\Omega_1\times \Omega_2).\]
Combining with \eqref{eq-formhilbten} we have:
\begin{equation}\label{eq-l2tensor}
L^2_*(\Omega_1)\csor L^2_*(\Omega_2)= L^2_*(\Omega_1\times\Omega_2).
\end{equation}

\subsection{Tensor products of operators}\label{sec-optensor} Again, for clarity we confine
ourselves to the case $N=2$. Let $\mathsf{H}_1,\mathsf{H}_2, \mathsf{H}_1',\mathsf{H}_2'$ be Hilbert Spaces.
Given bounded linear operators  $T_1: \mathsf{H}_1\rightarrow \mathsf{H}_1'$ and $T_2: \mathsf{H}_2\rightarrow \mathsf{H}_2'$,
we can define an algebraic tensor product $T_1\tensor T_2$ which maps the algebraic
tensor product $\mathsf{H}_1\tensor \mathsf{H}_2$ into $\mathsf{H}_1'\tensor \mathsf{H}_2'$ : on decomposable tensors it is given
by  $(T_1\tensor T_2)(x\tensor y)= T_1x \tensor T_2 y$ and extended linearly.
Then $T_1\tensor T_2$ is bounded on  the dense subspace $\mathsf{H}_1\tensor \mathsf{H}_2$ and therefore extends
to a bounded linear operator $T_1\csor T_2$ from $\mathsf{H}_1\csor \mathsf{H}_2$ to $\mathsf{H}_1'\csor \mathsf{H}_2'$.

This construction can be extended to densely defined unbounded linear operators, provided
they are {\em closed.} (see \cite[$\S$11.2, vol. 2]{kr}.) Given closed (or even closable)
operators $T_1: \dm(T_1)\rightarrow \mathsf{H}_1'$ and $T_1: \dm(T_2)\rightarrow \mathsf{H}_2'$, where
$\dm(T_1)$ and $\dm(T_2)$ are dense subspaces of the Hilbert spaces $\mathsf{H}_1$ and $\mathsf{H}_2$,
the algebraic tensor product (which is densely defined on $\mathsf{H}_1\csor \mathsf{H}_2$ with domain
$\dm(T_1)\tensor \dm(T_2)$) is  closable  (see \cite[Proposition~11.2.7 (vol. 2)]{kr}.)
Its closure, denoted by $T_1\csor T_2$ is a closed densely defined operator from $\mathsf{H}_1\csor \mathsf{H}_2$
to $\mathsf{H}_1'\csor \mathsf{H}_2'$. Note that this definition agrees with the previous one, when both $T_1$ and
$T_2$ are bounded.

\section{$\dbar$ on a product domain in the  $L^2$ sense} 
\label{sec-dbarl2}
In this section we construct a solution operator to the $\dbar$-problem on 
a product domain in terms of the canonical solution operators on the factor domains,
and show that the operator constructed in fact gives the canonical solution
on  $\ker\dbar_{p,1}$, the $\dbar$-closed $(p,1)$-forms. We use the following notation: for $j=1,\dots,N$, let
$\Omega_j$  be a bounded domain in Euclidean space $\cx^{n_j}$.
All our arguments and results will have easy 
generalizations to relatively compact domains in 
hermitian manifolds, which we leave for the reader.
We will assume that
the boundary of each domain $\Omega_j$ is Lipschitz, i.e.,
it can be represented locally in holomorphic coordinates
as the graph of a Lipschitz  function.  For each $j$, we also fix 
a weight function $\phi_j$ continuous on $\overline{\Omega_j}$. We
use the $L^2$ space  forms $L^2_*(\Omega_j)$ on the domain $\Omega_j$ with 
the weight $\phi_j$, i.e., the norm of a function $f$
is given by $\norm{f}^2=\int_{\Omega_j}\abs{f}^2 e^{-\phi_j} dV$,
where $dV$ is the volume form induced by the hermitian metric. (If we want spaces
without weights, we simply take $\phi_j\equiv 0$.)

The product domain $\Omega=\Omega_1\times\cdots\times\Omega_N$  also 
has Lipschitz boundary. We will consider $L^2_*(\Omega)$ with the 
weight $\phi=\phi_1+\dots +\phi_N$ (and with the product hermitian metric.)
The analog of formula \eqref{eq-l2tensor} holds with this choice of 
metric and weight:
\[ L^2_*(\Omega)= L^2_*(\Omega_1)\csor\dots\csor L^2_*(\Omega_N).\]

Our fundamental assumption will be the following:
{\em For each $j$, the $L^2$ $\dbar$-operator (with  weight $\phi_j$)
on  $\Omega_j$ has closed range in each degree.}

We remark that the closed range property is independent of the weight function $\phi_j$ as 
long as it is continuous to the boundary since the $L^2$ spaces are the same. 
We will show that the $\dbar$ operator on  $\Omega$ (with weight $\phi$)
also has closed range and deduce a formula for the canonical solution on $\ker(\dbar)$.

\subsection{ Construction of solution operator on smooth decomposable forms}
\label{sec-kdecomposable}
For simplicity of exposition, we from now on consider the case $N=2$, that
is we have two domains $\Omega_1$ and $\Omega_2$ and we are trying 
to solve the $L^2$ $\dbar$-problem on the product $\Omega=\Omega_1\times\Omega_2$.
In this section we write down some algebraic formulas which hold for smooth decomposable
forms on $\Omega$.

We first note that if $f\in\smooth_*(\overline{\Omega_1})$ and $g\in\smooth_*(\overline{\Omega_2})$,
then we have

	\begin{equation}\label{eq-leib0}
		\dbar(f\tensor g) = \dbar_1 f\tensor g +\sigma_1 f\tensor \dbar_2 g,
	\end{equation}
where $\dbar_1,\dbar_2,\dbar$ denote the $\dbar$ operator on the domains $\Omega_1,\Omega_2,\Omega$ 
respectively, and $\sigma_1$ is the map on $\smooth_*(\overline{\Omega_1})$ which is multiplication
by $(-1)^{p+q}$ on $\smooth_{p,q}(\overline{\Omega_1})$.
Note that if $T$ be any linear map of odd degree on the space  $\smooth_*(\overline{\Omega_1})$ 
(i.e. the degrees of $Tf$ and $f$
differ by an odd integer) then we obviously have
\begin{equation}\label{eq-sigma}
\sigma_1 T= -T\sigma_1.
\end{equation} 
Extending \eqref{eq-leib0} bilinearly to $\smooth_*(\overline{\Omega_1})\tensor\smooth_*(\overline{\Omega_2})$, 
we obtain the {\em Leibnitz formula} for smooth decomposable forms:
	\begin{equation}\label{eq-leibniz} 
		\dbar=\dbar_1\tensor I_2+ \sigma_1\tensor \dbar_2.
	\end{equation}

Let $K_1,K_2$ be the canonical solution operators on $\Omega_1,\Omega_2$ (see $\S$\ref{sec-canonicalsolution}.)
We define an operator $S$ from $\smooth_*(\overline{\Omega_1})\tensor\smooth_*(\overline{\Omega_2})$
into $L^2_*(\Omega_1)\tensor L^2_*(\Omega_2)$ by the formula
	\begin{equation}\label{eq-kdef}
		S= K_1\tensor I_2 +\sigma_1 P_1\tensor K_2,
	\end{equation} 
where $P_j$ denotes the harmonic projection on the domain $\Omega_j$ (see $\S$\ref{sec-l2defns}.) It will be
proved in the next section that $S$ extends to $L^2_*(\Omega)$, and coincides on 
$(0,1)$-forms with the canonical solution operator on the product $\Omega$. In this section, we take a first step 
in this direction by proving the following homotopy formula:
	\begin{lem}\label{lem-khomotopy}
		On the space of smooth decomposable forms 
		$\smooth_*(\overline{\Omega_1})\tensor\smooth_*(\overline{\Omega_2})$,
		we have
			\begin{equation}\label{eq-khomotopy}
				\dbar S+ S\dbar = I-P_1\tensor P_2,
			\end{equation} 	
		where $I$ is the identity map. 
	\end{lem}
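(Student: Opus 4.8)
The plan is to verify \eqref{eq-khomotopy} by a direct computation on a simple decomposable form $f \tensor g$ with $f \in \smooth_*(\overline{\Omega_1})$ and $g \in \smooth_*(\overline{\Omega_2})$, and then invoke bilinearity to conclude on all of $\smooth_*(\overline{\Omega_1})\tensor\smooth_*(\overline{\Omega_2})$. The two main ingredients are the Leibnitz formula \eqref{eq-leibniz} for $\dbar$ on the product and the homotopy formula \eqref{eq-homotopy} from Lemma~\ref{lem-homotopy}, applied on each factor: $\dbar_j K_j + K_j \dbar_j = I_j - P_j$. The only subtlety beyond bookkeeping is keeping track of the sign operator $\sigma_1$, using the anticommutation relation \eqref{eq-sigma} whenever $\sigma_1$ is moved past an odd-degree operator such as $\dbar_1$ or $K_1$.

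First I would compute $\dbar S (f \tensor g)$. Writing $S = K_1\tensor I_2 + \sigma_1 P_1 \tensor K_2$ and applying \eqref{eq-leibniz} to each of the two terms, I get four terms: from $\dbar(K_1 f \tensor g)$ the contributions $\dbar_1 K_1 f \tensor g$ and $\sigma_1 K_1 f \tensor \dbar_2 g$; from $\dbar(\sigma_1 P_1 f \tensor K_2 g)$ the contributions $\dbar_1 \sigma_1 P_1 f \tensor K_2 g$ and $\sigma_1 \sigma_1 P_1 f \tensor \dbar_2 K_2 g$. Since $P_1 f$ is harmonic, $\dbar_1 P_1 f = 0$, so the third term vanishes; and $\sigma_1^2 = I$, so the fourth term is $P_1 f \tensor \dbar_2 K_2 g$. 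Similarly I would compute $S \dbar (f \tensor g)$: applying $S$ to the two terms $\dbar_1 f \tensor g + \sigma_1 f \tensor \dbar_2 g$ of \eqref{eq-leibniz}, the $K_1 \tensor I_2$ part gives $K_1 \dbar_1 f \tensor g + K_1 \sigma_1 f \tensor \dbar_2 g$, and the $\sigma_1 P_1 \tensor K_2$ part gives $\sigma_1 P_1 \dbar_1 f \tensor K_2 g + \sigma_1 P_1 \sigma_1 f \tensor K_2 \dbar_2 g$. Here $P_1 \dbar_1 f = 0$ since the range of $\dbar_1$ is orthogonal to the harmonics, so the third term drops; and since $P_1$ has even degree, $\sigma_1 P_1 \sigma_1 = P_1$, so the fourth term is $P_1 f \tensor K_2 \dbar_2 g$.

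Now I would add the two expressions and group by tensor structure. The terms with $g$ in the second slot combine as $(\dbar_1 K_1 + K_1 \dbar_1) f \tensor g = (I_1 - P_1) f \tensor g$ by \eqref{eq-homotopy} on $\Omega_1$. The terms with $\dbar_2 g$ in the second slot are $\sigma_1 K_1 f \tensor \dbar_2 g + K_1 \sigma_1 f \tensor \dbar_2 g$; using \eqref{eq-sigma} — namely $K_1 \sigma_1 = -\sigma_1 K_1$ since $K_1$ has odd degree — these two cancel. The remaining terms are $P_1 f \tensor \dbar_2 K_2 g + P_1 f \tensor K_2 \dbar_2 g = P_1 f \tensor (\dbar_2 K_2 + K_2 \dbar_2) g = P_1 f \tensor (I_2 - P_2) g$, again by \eqref{eq-homotopy}, this time on $\Omega_2$. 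Adding up, $\dbar S (f\tensor g) + S\dbar(f \tensor g) = (I_1 - P_1)f \tensor g + P_1 f \tensor (I_2 - P_2) g = f \tensor g - P_1 f \tensor P_2 g = (I - P_1 \tensor P_2)(f \tensor g)$, which is exactly \eqref{eq-khomotopy} on simple tensors. Extending bilinearly finishes the proof.

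I do not expect a serious obstacle here; the argument is a formal manipulation. The one place to be careful is the sign/degree bookkeeping: one must confirm that $K_1$ and $\dbar_1$ genuinely have odd degree (so \eqref{eq-sigma} applies and produces the cancellation of the cross terms), and that $P_1$ has even degree (so it commutes with $\sigma_1$). Both are noted in $\S$\ref{sec-l2defns} and $\S$\ref{sec-canonicalsolution}. A secondary point worth a remark is that all the operators $K_j$, $P_j$ map $\smooth_*(\overline{\Omega_j})$ into $L^2_*(\Omega_j)$ (not necessarily back into smooth forms), so the identity \eqref{eq-khomotopy} is an identity in $L^2_*(\Omega_1)\tensor L^2_*(\Omega_2)$; this is consistent with the stated domain and codomain of $S$ and causes no difficulty, since $\dbar$ on the product is applied only after $S$, and before $S$ only to the smooth form $f \tensor g$.
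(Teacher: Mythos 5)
Your proposal is correct and follows essentially the same route as the paper: expand $\dbar S$ and $S\dbar$ via the Leibnitz formula \eqref{eq-leibniz}, kill the terms involving $\dbar_1 P_1$ and $P_1\dbar_1$, cancel the two cross terms using the anticommutation \eqref{eq-sigma} for the odd-degree operator $K_1$, and sum the survivors with the factorwise homotopy formula \eqref{eq-homotopy}. The only cosmetic difference is that you carry out the computation on a simple tensor $f\tensor g$ and extend bilinearly, while the paper writes the same identities directly at the level of the tensor-product operators.
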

	\begin{proof}
		 First note that
			\begin{align*}
				\dbar S &= (\dbar_1\tensor I_2+\sigma_1\tensor\dbar_2)
					(K_1\tensor I_2 +\sigma_1 P_1\tensor K_2)\\
				&=\dbar_1 K_1\tensor I_2 + \sigma_1 K_1\tensor \dbar_2+ P_1\tensor \dbar_2K_2,
			\end{align*}
		where one term vanishes because $\dbar_1P_1=0$. Similarly, since by the Hodge decomposition,
		 $P_1\dbar_1=0$, we have,
			\begin{align*}
				S\dbar&=(K_1\tensor I_2 + \sigma_1 P_1\tensor K_2)
					(\dbar_1\tensor I_2+\sigma_1\tensor\dbar_2)\\
				&=K_1\dbar_1\tensor I_2 +K_1\sigma_1\tensor \dbar_2+ P_1\tensor K_2\dbar_2\\
				&=K_1\dbar_1\tensor I_2 -\sigma_1K_1\tensor \dbar_2+ P_1\tensor K_2\dbar_2,
			\end{align*}
		where we have used \eqref{eq-sigma} in the last line along with the fact that  $K_1$ has degree $-1$.
		Combining  the  two expressions and canceling the middle terms we have
			\begin{align*}
				\dbar S + S\dbar &= (\dbar_1K_1+K_1\dbar_1)
					\tensor I_2 + P_1\tensor(\dbar_2 K_2 + K_2 \dbar_2)\\
					&= (I_1-P_1)\tensor I_2 +P_1\tensor (I_2-P_2)\\
					&= I_1\tensor I_2- P_1\tensor P_2,
 				\end{align*}
		where we have used the homotopy formula \eqref{eq-homotopy}  in each factor. The result follows.
	\end{proof}

\subsection{Density results: Extension  to $\boldmath{\dm(\dbar)}$}
In this section we use a density argument to extend the 
formulas of the last section. 
We first recall the following:
\begin{lem}[{\cite[Lemma~4.3.2, part~(i)]{cs}}]
\label{lem-fried}{\rm If $D$ is a Lipschitz domain, then the space $\smooth_*(\overline{D})$
of forms with $\smooth(\overline{D})$ coefficients 
 is dense in the graph-norm  in the domain $\dm(\dbar)$  of the $L^2$ $\dbar$ operator on $D$.}
\end{lem}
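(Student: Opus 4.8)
The plan is to prove this by the classical translation-and-mollification argument, adapted to Lipschitz domains, using crucially that $\dbar$ is a constant-coefficient operator on $\cx^n$. Since $e^{-\phi}$ is bounded above and below by positive constants on the compact set $\overline{D}$, the weighted graph norm is equivalent to the unweighted one, so we may assume $\phi\equiv 0$. First I would cover $\overline{D}$ by finitely many open sets $U_0,U_1,\dots,U_m$ with $U_0\Subset D$ and, for $j\geq 1$, $U_j$ a boundary chart in which $D\cap U_j$ is, in suitable real coordinates, the epigraph $\{x_n>\gamma_j(x')\}$ of a Lipschitz function $\gamma_j$; this is exactly what ``Lipschitz boundary'' provides. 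Fix a subordinate partition of unity $1=\chi_0+\sum_{j=1}^m\chi_j$ near $\overline{D}$. For $f\in\dm(\dbar)$, each $\chi_j f$ again lies in $\dm(\dbar)$, since $\dbar(\chi_j f)=\dbar\chi_j\wedge f+\chi_j\dbar f\in L^2_*(D)$, and because the graph norm is a norm and $f=\sum_j\chi_j f$ is a finite sum, it suffices to approximate each $\chi_j f$ in the graph norm by forms in $\smooth_*(\overline{D})$.

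The interior piece $\chi_0 f$ is compactly supported in $D$, so its mollifications $\rho_\epsilon\ast(\chi_0 f)$ lie in $\smooth_0(D)\subset\smooth_*(\overline{D})$, converge to $\chi_0 f$ in $L^2$, and satisfy $\dbar(\rho_\epsilon\ast\chi_0 f)=\rho_\epsilon\ast\dbar(\chi_0 f)\to\dbar(\chi_0 f)$ in $L^2$ because convolution commutes with the constant-coefficient operator $\dbar$; hence $\rho_\epsilon\ast(\chi_0 f)\to\chi_0 f$ in the graph norm. For a boundary piece $j\geq 1$, write $g=\chi_j f$, supported in a compact subset of $U_j$ contained in $\{x_n\geq\gamma_j(x')\}$, and let $e_n$ be the unit vector in the positive $x_n$-direction, which points into $D$ across the boundary part of $U_j$. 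For small $t>0$ the translate $g_t(x)=g(x+te_n)$ --- defined using only the values of $g$ inside $D$ --- makes sense on the open set $\{x_n>\gamma_j(x')-t\}$, which contains a neighborhood (within the relevant compact region) of $\overline{D}\cap U_j$, and there $\dbar g_t=(\dbar g)_t$. For $\epsilon$ small relative to $t$, the mollification $g_{t,\epsilon}=\rho_\epsilon\ast g_t$ is smooth on a full neighborhood of $\overline{D}\cap U_j$ and stays supported in $U_j$, so $g_{t,\epsilon}|_D\in\smooth_*(\overline{D})$, and $\dbar g_{t,\epsilon}=\rho_\epsilon\ast(\dbar g)_t$.

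For the convergence, as $t\downarrow 0$ the $L^2$-continuity of translation gives $g_t\to g$ in $L^2_*(D)$, and, since $\dbar g=\dbar(\chi_j f)$ is a genuine $L^2$ form, also $(\dbar g)_t\to\dbar g$ in $L^2_*(D)$. For each fixed $t$, standard mollification gives $g_{t,\epsilon}\to g_t$ and $\rho_\epsilon\ast(\dbar g)_t\to(\dbar g)_t$ in $L^2$ as $\epsilon\downarrow 0$. A diagonal argument --- choosing $\epsilon=\epsilon(t)\downarrow 0$ fast enough as $t\downarrow 0$ --- then produces $g_{t,\epsilon(t)}\in\smooth_*(\overline{D})$ with $g_{t,\epsilon(t)}\to\chi_j f$ in the graph norm. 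Summing these approximations over $j=0,1,\dots,m$ finishes the proof.

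The point requiring care --- though it is the same geometric bookkeeping that appears whenever one proves density of $\smooth(\overline{D})$ in Sobolev spaces on Lipschitz domains --- is the handling of the non-smooth boundary: one cannot translate along a normal field, and instead must use that in each Lipschitz boundary chart a single fixed direction (the vertical direction of the epigraph, licensed by the uniform interior cone property) moves points uniformly into $D$, and then coordinate $t$ and $\epsilon$ so that $g_{t,\epsilon}$ is honestly smooth up to $\overline{D}$. By contrast, there is no genuine Friedrichs commutator term to estimate here: because $\dbar$ has constant coefficients and the form part $dz^I\wedge d\overline{z}^J$ is merely carried along, $\dbar$ commutes exactly with both translation and convolution, which is what makes the graph-norm convergence immediate once the $L^2$ convergences are in hand.
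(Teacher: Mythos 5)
Your proof is correct and is essentially the argument the paper points to (the paper does not write out details, but cites the mollification proof in H\"ormander and in part (i) of the Density Lemma 4.3.2 of Chen--Shaw): a partition of unity, a rigid motion pushing each piece so that it is defined on a neighborhood of $\overline{D}$, and then convolution with a mollifier, with no Friedrichs commutator needed since only the graph norm of $\dbar$ (and not of $\dbar^*$) is involved and $\dbar$ has constant coefficients. The only cosmetic difference is that you translate along the vertical direction of each Lipschitz graph, whereas the cited proof invokes local star-shapedness and dilates about a star center; both devices exploit the same uniform cone property of a Lipschitz boundary and yield the same conclusion.
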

 Since $D$ is Lipschitz, it is locally star-shaped.  This is a special case
of Friedrichs' Lemma and follows from smoothing by convolution with a mollifier;
see Section 1.2 in Chapter I in H\"ormander \cite{Hor1}
or  Part (i) of proof of  the Density Lemma 4.3.2 in \cite{cs}. The following is now easy:

\begin{lem}\label{lem-density}
$\smooth(\overline{\Omega_1})\tensor\smooth(\overline{\Omega_2})$ 
is  dense in the domain of $\dbar$ in the graph norm of the $\dbar$-operator on
$\Omega=\Omega_1\times\Omega_2$.
\end{lem}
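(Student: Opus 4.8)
The plan is to combine Lemma~\ref{lem-fried} (density of $\smooth_*(\overline{\Omega})$ in $\dm(\dbar)$ for a single Lipschitz domain, applied to $\Omega$ itself) with Lemma~\ref{lem-cartan} (approximation of compactly supported smooth functions on a product by algebraic tensor products of compactly supported smooth functions on the factors). The issue is that Lemma~\ref{lem-cartan} is stated for $\smooth_0$, i.e. compactly supported functions, whereas here we need to approximate forms smooth \emph{up to the boundary}. So the first step is to reduce to the compactly supported case, or rather to handle the boundary by a separate device.

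First I would observe that it suffices to approximate, in the graph norm, an arbitrary form $u\in\smooth_*(\overline{\Omega})$ by elements of $\smooth(\overline{\Omega_1})\tensor\smooth(\overline{\Omega_2})$; Lemma~\ref{lem-fried} then finishes the job by a diagonal/triangle-inequality argument, since the graph norm $\norm{u}+\norm{\dbar u}$ is what appears on both sides. Writing $u$ in terms of its coefficients, and since $\dbar$ acts coefficientwise (up to constant signs) on a fixed frame $dz^I\wedge d\bar z^J$ with $z=(z^{(1)},z^{(2)})$ split according to the two factors, it is enough to approximate a single coefficient function $a\in\smooth(\overline{\Omega})$ together with its first-order derivatives in the $L^2(\Omega)$ norm by functions in $\smooth(\overline{\Omega_1})\tensor\smooth(\overline{\Omega_2})$ — i.e. to show this algebraic tensor product is dense in $W^1(\Omega)$, indeed in $\mathcal{C}^1(\overline{\Omega})$ with the $\mathcal{C}^1$-norm would suffice since $\Omega$ is bounded.

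The main obstacle is precisely this last density statement, because Lemma~\ref{lem-cartan} as quoted concerns $\smooth_0$. The cleanest route is to invoke a Whitney-type extension: extend $a\in\smooth(\overline{\Omega})$ to a function $\widetilde a\in\smooth_0(\cx^{n_1}\times\cx^{n_2})$ (compactly supported, since $\overline\Omega$ is compact), apply Lemma~\ref{lem-cartan} to $\widetilde a$ to get $\widetilde a_\nu\in\smooth_0(\cx^{n_1})\tensor\smooth_0(\cx^{n_2})$ with $\widetilde a_\nu\to\widetilde a$ in $\mathcal{C}^1$, and then restrict back to $\overline{\Omega_1}\times\overline{\Omega_2}$: the restrictions lie in $\smooth(\overline{\Omega_1})\tensor\smooth(\overline{\Omega_2})$ and converge to $a$ in $\mathcal{C}^1(\overline\Omega)$, hence in $W^1(\Omega)$. (A Lipschitz domain admits such a Whitney extension operator, or one may extend each factor separately; either way the product structure of the approximants is preserved because $\smooth_0(\cx^{n_1})\tensor\smooth_0(\cx^{n_2})$ restricts into $\smooth(\overline{\Omega_1})\tensor\smooth(\overline{\Omega_2})$.) Assembling the coefficientwise approximations back into a form and estimating $\dbar$ of the difference by the $\mathcal{C}^1$-norm of the difference of coefficients yields graph-norm convergence, completing the proof.

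I expect the only genuinely delicate point to be the justification of the simultaneous extension in the product-compatible form; everything else — the passage from $\mathcal{C}^1$ convergence to graph-norm convergence, the coefficientwise reduction, and the final diagonal argument against Lemma~\ref{lem-fried} — is routine. One should also remark that the weight $e^{-\phi}$ is continuous hence bounded above and below on the compact set $\overline\Omega$, so weighted and unweighted $L^2$ norms are equivalent and the weight plays no role here.
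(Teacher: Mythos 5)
Your proof is correct and follows essentially the same route as the paper: reduce via Lemma~\ref{lem-fried} to forms in $\smooth_*(\overline{\Omega})$, approximate those in the $\mathcal{C}^1$ norm by decomposable forms using Lemma~\ref{lem-cartan}, and observe that the $\mathcal{C}^1$ norm dominates the graph norm. The only difference is that you spell out, via a Whitney/Stein extension from the Lipschitz set $\overline{\Omega}$ followed by restriction of the decomposable approximants, the step the paper dismisses as ``easily follows,'' namely the passage from the $\smooth_0$ statement of Lemma~\ref{lem-cartan} to approximation of functions smooth up to the boundary --- a correct and worthwhile elaboration.
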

\begin{proof}
Given a form $f\in\dm(\dbar)$ on $\Omega$, by the Lemma~\ref{lem-fried}, we can approximate it
in the graph norm by a form $\tilde{f}\in\smooth_*(\overline{\Omega})$.
Note that it easily follows from Lemma~\ref{lem-cartan} that
every form in $\smooth_*(\overline{\Omega})$ can be approximated
in the $\mathcal{C}^k$ norm (where $0\leq k\leq \infty$) by forms in the algebraic
tensor product 
$\smooth_*(\overline{\Omega_1})\tensor \smooth_*(\overline{\Omega_2})$. 
Therefore, approximating $\tilde{f}$ by a form in $\smooth_*(\overline{\Omega_1})\tensor \smooth_*(\overline{\Omega_2})$
in the $\mathcal{C}^1$ norm (which dominates the graph norm)
our result follows.
\end{proof}

We now extend the formulas of the previous section from the space $\smooth_*(\overline{\Omega_1})\tensor\smooth_*(\overline{\Omega_2})$ of smooth decomposable
forms (which is dense in the graph norm  of $\dbar$)
to $\dm(\dbar)$.
\begin{lem}\label{lem-ext}
On the dense subspace $\dm(\dbar)\subset L^2_*(\Omega)$ we have :
\begin{equation}\label{eq-leibniz-csor} 
		\dbar=\dbar_1\csor I_2+ \sigma_1\csor \dbar_2.
\end{equation}
The operator $S$ defined in \eqref{eq-kdef} can be extended to $L^2_*(\Omega)$ by the formula
\begin{equation}\label{eq-kdef-csor}
		S= K_1\csor I_2 +\sigma_1 P_1\csor K_2,
\end{equation} 
and on $\dm(\dbar)$ the following homotopy formula holds:
\begin{equation}\label{eq-khomotopy-csor}
				\dbar S+ S\dbar = I-P_1\csor P_2.
\end{equation} 	
\end{lem}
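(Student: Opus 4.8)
The plan is to extend each of the three identities from the dense subspace $\smooth_*(\overline{\Omega_1})\tensor\smooth_*(\overline{\Omega_2})$ to $\dm(\dbar)$ by a routine closure argument, using the density result Lemma~\ref{lem-density} together with the general facts about tensor products of closed operators recalled in $\S$\ref{sec-optensor}. First I would observe that $\dbar_1$, $\dbar_2$, and $\dbar$ are all closed, densely defined operators on the respective $L^2$ spaces, so by the construction in $\S$\ref{sec-optensor} the operators $\dbar_1\csor I_2$ and $\sigma_1\csor\dbar_2$ are well-defined closed operators on $L^2_*(\Omega)=L^2_*(\Omega_1)\csor L^2_*(\Omega_2)$ (here $\sigma_1$ is bounded, so $\sigma_1\csor\dbar_2$ is simply the closure of the algebraic tensor product). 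Their sum is closable; on the common core $\smooth_*(\overline{\Omega_1})\tensor\smooth_*(\overline{\Omega_2})$ it agrees with $\dbar$ by the Leibnitz formula \eqref{eq-leibniz}, and by Lemma~\ref{lem-density} this space is a core for $\dbar$ on $\Omega$. Since $\dbar$ is the closure of its restriction to this core, and the sum $\dbar_1\csor I_2+\sigma_1\csor\dbar_2$ is a closed extension of that same restriction, the two closed operators agree on $\dm(\dbar)$; this gives \eqref{eq-leibniz-csor}. One should check that no domain subtleties arise, i.e. that the right-hand side is no larger than $\dbar$ on $\dm(\dbar)$ — this follows because $\dbar$, being the closure of a closable operator whose closure it is, is the \emph{minimal} closed extension, so any argument only needs the inclusion on the core plus closedness on both sides; I would state this carefully as the one genuinely delicate point.

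Next, for \eqref{eq-kdef-csor}: the operators $K_1$, $P_1$, $K_2$ are all bounded on the respective $L^2$ spaces (by the closed range hypothesis and the closed graph theorem, as recalled in $\S$\ref{sec-canonicalsolution}), and $\sigma_1$ is bounded. Hence $K_1\csor I_2$ and $\sigma_1 P_1\csor K_2$ are bounded operators on $L^2_*(\Omega)$ in the sense of $\S$\ref{sec-optensor}, being the continuous extensions of the corresponding algebraic tensor products. On the dense subspace $\smooth_*(\overline{\Omega_1})\tensor\smooth_*(\overline{\Omega_2})$ this bounded operator $S$ coincides with the operator defined by \eqref{eq-kdef}, so \eqref{eq-kdef-csor} is just the statement that the bounded operator \eqref{eq-kdef} extends continuously, which is automatic. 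The only thing to remark is that since $\smooth_*(\overline{\Omega_1})\tensor\smooth_*(\overline{\Omega_2})$ is dense in $L^2_*(\Omega)$ (this follows from Lemma~\ref{lem-cartan} and the density of $\smooth_*(\overline{\Omega_j})$ in $L^2_*(\Omega_j)$, or already from \eqref{eq-l2tensor}), this extension is unique.

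Finally, for the homotopy formula \eqref{eq-khomotopy-csor}: all four operators appearing, namely $\dbar S$, $S\dbar$, $I$, and $P_1\csor P_2$, are defined on $\dm(\dbar)$ — note $S$ maps into $\dm(\dbar)$ by the degree count and the fact that $K_j$ is the canonical solution, while $S\dbar$ makes sense because $\dbar$ maps $\dm(\dbar)$ into $L^2_*(\Omega)$ on which $S$ is bounded. On the core $\smooth_*(\overline{\Omega_1})\tensor\smooth_*(\overline{\Omega_2})$ the identity \eqref{eq-khomotopy-csor} holds by Lemma~\ref{lem-khomotopy}. To pass to the limit, I would take $f\in\dm(\dbar)$ and a sequence $f_\nu$ in the core with $f_\nu\to f$ and $\dbar f_\nu\to\dbar f$ (graph-norm convergence, available by Lemma~\ref{lem-density}). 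Then $Sf_\nu\to Sf$ and $\dbar f_\nu\to\dbar f$ so $S\dbar f_\nu\to S\dbar f$; also $(I-P_1\csor P_2)f_\nu\to(I-P_1\csor P_2)f$; hence $\dbar Sf_\nu=(I-P_1\csor P_2)f_\nu-S\dbar f_\nu$ converges in $L^2_*(\Omega)$, and since $\dbar$ is closed and $Sf_\nu\to Sf$, we conclude $Sf\in\dm(\dbar)$ and $\dbar Sf=\lim\dbar Sf_\nu$, yielding \eqref{eq-khomotopy-csor} for $f$.

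I expect no serious obstacle here; the whole lemma is a formal consequence of the smooth-decomposable identities plus density, and the main thing to get right is the bookkeeping around which operators are bounded versus merely closed, and in particular the verification (in the proof of \eqref{eq-leibniz-csor}) that $\dbar$ on $\Omega$ is precisely the closure of its restriction to the decomposable core, so that equality on the core propagates to all of $\dm(\dbar)$ rather than just to a subspace. I would phrase the argument so that this point is made explicitly and the remaining steps are dispatched in a sentence each.
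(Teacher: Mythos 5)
Your proposal is correct and follows the same route as the paper, which disposes of all three identities in one sentence by passing to limits from the decomposable core using Lemma~\ref{lem-density}; you have simply supplied the details (continuity of $S$, the core property for $\dbar$, and the closedness of $\dbar$ in the final limit) that the paper leaves implicit. The one point where you are more careful than the paper --- reading \eqref{eq-leibniz-csor} as the statement that $\dbar$ is the closure of the algebraic sum on the core, rather than worrying about the domain of a sum of two closed operators --- is exactly the right way to interpret the formula.
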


\begin{proof}
All three formulas follow from the corresponding formulas for decomposable forms 
by taking limits, using  Lemma~\ref{lem-density} for \eqref{eq-leibniz-csor} and
\eqref{eq-khomotopy-csor}.
\end{proof}

\subsection{Consequences}
Using the homotopy formula \eqref{eq-khomotopy-csor}, we can now prove:
\begin{thm} Let $\Omega_1$ and $\Omega_2$ be two bounded  domains in complex hermitian manifolds
with Lipschitz boundaries.
Suppose  that the $\dbar$ operator has closed range in $L^2(\Omega_j)$  for all degrees, where $j=1,2$. Then 
$\dbar$ has closed range in $L^2(\Omega)$  for the product domain $\Omega=\Omega_1\times\Omega_2$.
\end{thm}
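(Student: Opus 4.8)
The plan is to exploit the homotopy formula \eqref{eq-khomotopy-csor} together with the orthogonality of the summands in the Hodge-type decomposition, exactly mirroring the proof of Lemma~\ref{lem-j}(ii). First I would record that the operator $S$ of \eqref{eq-kdef-csor} is \emph{bounded} on $L^2_*(\Omega)$: indeed $K_1,K_2$ are bounded (by the closed range hypothesis on the factors and the closed graph theorem, as in $\S$\ref{sec-canonicalsolution}), $P_1,P_2$ are orthogonal projections hence bounded, $\sigma_1$ is an isometry, and the Hilbert tensor product of bounded operators is bounded with norm the product of the norms; so $\norm{S}\leq \norm{K_1}+\norm{K_2}$. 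Let me write $c=\norm{S}$.

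Next I would show closedness of $\im(\dbar)$ directly. Suppose $g\in \overline{\im(\dbar)}$; I want $g\in\im(\dbar)$. Since $g$ is a limit of elements of $\im(\dbar)\subset\ker(\dbar)$ and $\dbar$ is closed, $g\in\ker(\dbar)$, and moreover $g$ is orthogonal to $\mathcal{H}_*(\Omega)=\ker(\dbar)\cap\ker(\dbar^*)$ (each element of $\im(\dbar)$ is orthogonal to $\ker(\dbar^*)\supset\mathcal{H}_*$, and this passes to the limit). The key point is that on $\ker(\dbar)$ the homotopy formula \eqref{eq-khomotopy-csor} reduces to $\dbar(Sg)=g-(P_1\csor P_2)g$. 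I would then identify $P_1\csor P_2$ with the harmonic projection $P$ of $\Omega$; granting this, $(P_1\csor P_2)g=Pg=0$ since $g\perp\mathcal{H}_*(\Omega)$, whence $\dbar(Sg)=g$ with $\norm{Sg}\leq c\norm{g}$. Thus every $g$ in the closure of the range is hit by $\dbar$, so $\im(\dbar)$ is closed; in fact this shows $\dbar$ admits the bounded solution operator $S$ on $\im(\dbar)$, which is a strong form of the closed range property.

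The step I expect to be the main obstacle is the identification $P=P_1\csor P_2$, i.e.\ that the harmonic space of the product is the Hilbert tensor product of the harmonic spaces of the factors. One inclusion is easy: if $f_1\in\mathcal{H}_*(\Omega_1)$ and $f_2\in\mathcal{H}_*(\Omega_2)$ then by the Leibnitz formula \eqref{eq-leibniz-csor}, $\dbar(f_1\tensor f_2)=\dbar_1 f_1\tensor f_2+\sigma_1 f_1\tensor\dbar_2 f_2=0$, and similarly $\dbar^*(f_1\tensor f_2)=0$ using the corresponding tensor formula for $\dbar^*$ (which follows by taking adjoints in \eqref{eq-leibniz-csor}, since $(A\csor B)^*=A^*\csor B^*$ for closed densely defined operators), so $\mathcal{H}_*(\Omega_1)\csor\mathcal{H}_*(\Omega_2)\subseteq\mathcal{H}_*(\Omega)$. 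For the reverse inclusion, the cleanest route avoids computing $\dbar^*$ on the product altogether: apply \eqref{eq-khomotopy-csor} to $f\in\mathcal{H}_*(\Omega)$ to get $\dbar(Sf)+S(\dbar f)=f-(P_1\csor P_2)f$; since $\dbar f=0$ this gives $f-(P_1\csor P_2)f=\dbar(Sf)\in\im(\dbar)$, so $f-(P_1\csor P_2)f\perp\mathcal{H}_*(\Omega)\ni f$. Combined with $(P_1\csor P_2)f\in\mathcal{H}_*(\Omega_1)\csor\mathcal{H}_*(\Omega_2)\subseteq\mathcal{H}_*(\Omega)$, a Pythagoras computation forces $f=(P_1\csor P_2)f$. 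Hence $\mathcal{H}_*(\Omega)=\mathcal{H}_*(\Omega_1)\csor\mathcal{H}_*(\Omega_2)$ and $P=P_1\csor P_2$, closing the argument. (The same computation, read off \eqref{eq-khomotopy-csor} for general $\dbar$-closed $f$, simultaneously yields closed range and sets up the K\"unneth formula of Theorem~\ref{thm-main}.)
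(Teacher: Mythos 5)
Your argument is correct, and it reaches the conclusion by a route that differs from the paper's in its final step. The paper converts the homotopy formula \eqref{eq-khomotopy-csor} into the closed range property by passing through cohomology: it shows that every class in $H^*_{L^2}(\Omega)$ is represented by the harmonic form $(P_1\csor P_2)f$, and then invokes Kodaira's criterion (Lemma~\ref{lem-j}(ii): surjectivity of $\eta$ forces $\im(\dbar)$ to be closed); the identity \eqref{eq-harmonictensor} and $P=P_1\csor P_2$ then drop out as corollaries. You instead bypass cohomology entirely: you first establish $P=P_1\csor P_2$ and then verify $\overline{\im(\dbar)}\subseteq\im(\dbar)$ directly, producing along the way the bounded right inverse $S$ on the range with $\norm{Sg}\leq c\norm{g}$ --- a slightly stronger, more quantitative conclusion. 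The cost of your ordering is that the identification of the harmonic projection must be proved \emph{before} closed range rather than after, which is why you need both inclusions between $\mathcal{H}_*(\Omega_1)\csor\mathcal{H}_*(\Omega_2)$ and $\mathcal{H}_*(\Omega)$ up front. (In fact you could shorten your own argument: for $g\in\overline{\im(\dbar)}$ one has $g\perp\ker(\dbar^*)\supseteq\mathcal{H}_*(\Omega_1)\csor\mathcal{H}_*(\Omega_2)$, and the latter is exactly the range of the orthogonal projection $P_1\csor P_2$, so $(P_1\csor P_2)g=0$ already follows from the easy inclusion alone, without the full identity $P=P_1\csor P_2$.)

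The one step you should treat more carefully is the claim that $\dbar^*(f_1\tensor f_2)=0$ for harmonic $f_1,f_2$ "by taking adjoints in \eqref{eq-leibniz-csor}, since $(A\csor B)^*=A^*\csor B^*$." For unbounded closed operators only the inclusion $A^*\csor B^*\subseteq(A\csor B)^*$ is automatic, and \eqref{eq-leibniz-csor} is a \emph{sum} of two such tensor products, so taking adjoints termwise is not justified as stated. The fix is elementary and avoids adjoint calculus: to see $f_1\tensor f_2\in\dm(\dbar^*)$ with $\dbar^*(f_1\tensor f_2)=0$, check $(\dbar u, f_1\tensor f_2)=0$ for $u$ ranging over the smooth decomposable forms, which are dense in $\dm(\dbar)$ in the graph norm by Lemma~\ref{lem-density}; for $u=u_1\tensor u_2$ the Leibnitz formula gives $(\dbar u,f_1\tensor f_2)=(\dbar_1 u_1,f_1)(u_2,f_2)\pm(u_1,f_1)(\dbar_2 u_2,f_2)=0$ since $\dbar_1^*f_1=0$ and $\dbar_2^*f_2=0$. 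Note that the paper itself asserts the inclusion $\mathcal{H}_*(\Omega_1)\csor\mathcal{H}_*(\Omega_2)\subset\mathcal{H}_*(\Omega)$ without proof, and its own argument depends on it just as yours does, so this is a point where both treatments lean on the same (true, but not entirely trivial) fact.
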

\begin{proof}
We recall the result established in Lemma~\ref{lem-j}: if the map $\eta(f)=[f]$
is surjective, then $\dbar$ has closed range. In other words, we need to show that
for every cohomology class $\alpha\in H^*_{L^2}(\Omega)$ there is a {\em harmonic form}
$h\in \mathcal{H}_*(\Omega)$ such that $\alpha=[h]$. We will actually do better. We will
show that there is such a $h$ in the tensor product $\mathcal{H}_*(\Omega_1)\csor\mathcal{H}_*(\Omega_2)\subset\mathcal{H}_*(\Omega)$.
Note that this will also show that 
\begin{equation}\label{eq-harmonictensor}
	\mathcal{H}_*(\Omega_1)\csor\mathcal{H}_*(\Omega_2)=\mathcal{H}_*(\Omega).
\end{equation}

Indeed, let $f\in \ker(\dbar)$ be a form representing the cohomology class $\alpha$, 
i.e. $\alpha=[f]$. Then, from the homotopy formula \eqref{eq-khomotopy-csor}, we have
\[ f- \dbar(Kf)= (P_1\csor P_2)f.\]
Therefore, the form $(P_1\csor P_2)f\in \mathcal{H}_*(\Omega_1)\csor\mathcal{H}_*(\Omega_2)$
also represents the same cohomology class $\alpha$, i.e. $[(P_1\csor P_2)f]=\alpha$. Therefore
every cohomology class in $H^*_{L^2}(\Omega)$ can be represented by a harmonic form in 
$\mathcal{H}_*(\Omega)$ (indeed by a harmonic form in the possibly smaller subspace 
$\mathcal{H}_*(\Omega_1)\csor\mathcal{H}_*(\Omega_2)$.) This shows that the map $\eta$ of
Lemma~\ref{lem-j} is surjective. The equality \eqref{eq-harmonictensor} now follows from the
fact that $\eta$ is injective.
\end{proof}
We now note a few important consequences of the above result:
\begin{cor}(i) The $L^2$ K\"unneth formula holds for the Dolbeault cohomology with $L^2$ coefficients:
\begin{equation}\label{eq-kunneth}
H^*_{L^2}(\Omega)=H^*_{L^2}(\Omega_1)\csor H_{L^2}^*(\Omega_2)
\end{equation}

(ii) The harmonic projections satisfy $P=P_1\csor P_2$
\end{cor}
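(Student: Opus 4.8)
The plan is to derive both parts from the theorem just proved---closed range of $\dbar$ on $\Omega$ together with the identity $\mathcal{H}_*(\Omega)=\mathcal{H}_*(\Omega_1)\csor\mathcal{H}_*(\Omega_2)$ of \eqref{eq-harmonictensor}---and from the elementary properties of Hilbert tensor products recalled in $\S$\ref{sec-hilten} and $\S$\ref{sec-optensor}.

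For (i), note first that since $\dbar$ has closed range on $\Omega$, the discussion preceding Lemma~\ref{lem-j} identifies $H^*_{L^2}(\Omega)$ with the closed subspace $\mathcal{H}_*(\Omega)$ of $L^2_*(\Omega)$, and this identification is \emph{isometric} since the Hilbert space structure on the quotient $H^*_{L^2}(\Omega)$ is by definition the one transported from $\mathcal{H}_*(\Omega)$. By the standing hypothesis, $\dbar$ has closed range on each $\Omega_j$, so the same holds there: $H^*_{L^2}(\Omega_j)\cong\mathcal{H}_*(\Omega_j)$ isometrically, $j=1,2$. Since an isometric isomorphism of each factor induces an isometric isomorphism of the Hilbert tensor products (the tensor product of two unitaries is unitary; see \cite{weid,kr}), combining these isomorphisms with \eqref{eq-harmonictensor} gives
\[
H^*_{L^2}(\Omega)\cong\mathcal{H}_*(\Omega)=\mathcal{H}_*(\Omega_1)\csor\mathcal{H}_*(\Omega_2)\cong H^*_{L^2}(\Omega_1)\csor H^*_{L^2}(\Omega_2),
\]
which is \eqref{eq-kunneth}.

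For (ii), I would argue that $P_1\csor P_2$ is itself an orthogonal projection and identify its range. By $\S$\ref{sec-optensor}, $P_1\csor P_2$ is a bounded operator on $L^2_*(\Omega_1)\csor L^2_*(\Omega_2)=L^2_*(\Omega)$; from the multiplicativity $(T_1\csor T_2)(T_1'\csor T_2')=(T_1T_1')\csor(T_2T_2')$ and the adjoint rule $(T_1\csor T_2)^*=T_1^*\csor T_2^*$ for bounded operators, together with $P_j=P_j^2=P_j^*$, it follows that $P_1\csor P_2$ is idempotent and self-adjoint, hence the orthogonal projection onto its range. That range is the closed span of the simple tensors $P_1x\tensor P_2y$, i.e.\ $\mathcal{H}_*(\Omega_1)\csor\mathcal{H}_*(\Omega_2)$ (indeed $P_1\csor P_2$ is the identity on this subspace and maps the dense algebraic tensor product into it), and by \eqref{eq-harmonictensor} this equals $\mathcal{H}_*(\Omega)$. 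Since the orthogonal projection onto a fixed closed subspace of a Hilbert space is unique, $P_1\csor P_2=P$. As a consistency check, for $f\in\ker(\dbar)$ the homotopy formula \eqref{eq-khomotopy-csor} gives $(P_1\csor P_2)f=f-\dbar(Sf)$, which the proof of the theorem exhibits as the harmonic representative of $[f]$---in agreement with $Pf$.

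I do not expect a genuine obstacle here: once \eqref{eq-harmonictensor} and closed range on $\Omega$ are available, both statements are formal. The only steps meriting an explicit word are the observation that the identification of $L^2$ cohomology with harmonic forms is an isometry and that the Hilbert tensor product is functorial under unitaries (for (i)), and the computation of the range of a tensor product of orthogonal projections (for (ii)); both are standard facts about Hilbert tensor products, covered by the references in $\S$\ref{sec-hilten}--$\S$\ref{sec-optensor}.
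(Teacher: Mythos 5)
Your proposal is correct and follows essentially the same route as the paper: part (i) via the natural isomorphisms $H^*_{L^2}\cong\mathcal{H}_*$ on $\Omega$, $\Omega_1$, $\Omega_2$ combined with \eqref{eq-harmonictensor}, and part (ii) via the identification of $P_1\csor P_2$ as the orthogonal projection onto $\mathcal{H}_*(\Omega_1)\csor\mathcal{H}_*(\Omega_2)=\mathcal{H}_*(\Omega)$ (the paper's ``directly from \eqref{eq-harmonictensor}'' alternative), with the homotopy-formula comparison that the paper lists as its other option appearing in your write-up as a consistency check. You simply spell out the standard Hilbert-tensor-product facts that the paper leaves implicit.
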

\begin{proof} Part (i) follows from the natural isomorphisms 
$H^*_{L^2}(\Omega)\cong\mathcal{H}_*(\Omega)$,  $H^*_{L^2}(\Omega_1)\cong\mathcal{H}_*(\Omega_1)$
and  $H^*_{L^2}(\Omega_2)\cong\mathcal{H}_*(\Omega_2)$ (note that the range of $\dbar$ is 
closed in each case.) Part (ii) follows from comparing the homotopy 
formulas \eqref{eq-khomotopy-csor} and \eqref{eq-homotopy}, or directly from  \eqref{eq-harmonictensor}.
\end{proof}

We now come to the most significant consequence:
\begin{thm}\label{prop-kiscansol}
For $0\leq p\leq n$, the restriction of the map $S$ defined in \eqref{eq-kdef-csor} to the $\dbar$-closed
$(p,1)$-forms coincides with the restriction of the canonical solution operator $\dbar^*\mathsf{N}$ to the 
same space. 
\end{thm}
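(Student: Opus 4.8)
The plan is to show that for a $\dbar$-closed $(p,1)$-form $f$, the form $Sf$ is orthogonal to $\ker(\dbar)$, which by the characterization of the canonical solution operator $K=\dbar^*\mathsf{N}$ recalled in $\S$\ref{sec-canonicalsolution} forces $Sf = Kf$. Since $Sf$ is already a solution of $\dbar(Sf)=f$ — this is immediate from the homotopy formula \eqref{eq-khomotopy-csor} together with $\dbar f=0$ and the fact that $(P_1\csor P_2)f$ is harmonic hence $\dbar$-closed, so $\dbar(Sf) = f - (P_1\csor P_2)f$, and one checks $(P_1\csor P_2)f = Pf$ annihilates $f$ modulo $\im(\dbar)$, actually $\dbar Sf = f$ exactly when $f\perp\mathcal H$ and otherwise $\dbar Sf = f - Pf$; in either case $Sf$ solves $\dbar u = f - Pf$ which is what $Kf$ does too — the whole content is the orthogonality $Sf\perp\ker(\dbar)$.

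First I would reduce, by the density Lemma~\ref{lem-density}, to checking orthogonality against a dense subset of $\ker(\dbar)$. Specifically, $Sf$ is a $(p,0)$-form, so $\ker(\dbar)$ in bidegree $(p,0)$ consists of the $\dbar$-closed $(p,0)$-forms, and it suffices to show $(Sf, g) = 0$ for every $g\in\smooth_*(\overline{\Omega_1})\tensor\smooth_*(\overline{\Omega_2})$ with $\dbar g = 0$, then pass to the limit. Writing $f = f_1\tensor f_2$ a simple decomposable $(p,1)$-form and expanding $Sf = K_1f_1\tensor f_2 + \sigma_1P_1f_1\tensor K_2f_2$ via \eqref{eq-kdef}, I would compute the inner product against a decomposable $g = g_1\tensor g_2$ using the product structure $(a\tensor b, c\tensor d) = (a,c)(b,d)$. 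The degree bookkeeping is the crucial constraint: for $f$ to have bidegree $(p,1)$ we need the bidegrees of $f_1$ and $f_2$ to add up correctly, and since $K_2$ lowers the antiholomorphic degree by one, only certain splittings contribute; the term $K_1f_1\tensor f_2$ requires $f_2$ to carry the single $\dbar$-degree, while $\sigma_1 P_1 f_1\tensor K_2 f_2$ requires $f_1$ to be a $(p',0)$-form with $P_1f_1\neq 0$, i.e. a holomorphic-type harmonic form.

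The key steps, in order, are: (1) reduce to decomposable $f$ and decomposable $\dbar$-closed test forms $g$ by bilinearity and density; (2) use $\dbar g = 0$ together with the Leibnitz formula \eqref{eq-leibniz} to extract the relations $\dbar_1 g_1 = 0$ and $\dbar_2 g_2 = 0$ (a $(p,0)$-form $g$ on the product is $\dbar$-closed iff each factor is, after the degree split is fixed); (3) for the term $(K_1f_1\tensor f_2, g_1\tensor g_2)$, use that $K_1f_1\perp\ker(\dbar_1)$ by definition of the canonical solution operator on $\Omega_1$ and $g_1\in\ker(\dbar_1)$, so this inner product vanishes; (4) for the term $(\sigma_1P_1f_1\tensor K_2f_2, g_1'\tensor g_2')$ in the complementary degree split, use that $K_2f_2\perp\ker(\dbar_2)$ while $g_2'\in\ker(\dbar_2)$, so this also vanishes. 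Hence every term of $(Sf,g)$ is zero, $Sf\perp\ker(\dbar)$, and therefore $Sf$ is the canonical solution.

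The main obstacle I anticipate is the bidegree bookkeeping in step (2)–(4): making precise that when a $(p,0)$-form $g$ on $\Omega_1\times\Omega_2$ is decomposed into its decomposable constituents $g_1\tensor g_2$ with $g_1$ of bidegree $(p_1,0)$ on $\Omega_1$ and $g_2$ of bidegree $(p_2,0)$ on $\Omega_2$ and $p_1+p_2 = p$, the condition $\dbar g = 0$ really does descend to $\dbar_1 g_1 = 0$ and $\dbar_2 g_2 = 0$ for each constituent — this is where one must be careful that the two summands of $Sf$ pair against genuinely different bidegree components of $g$, so that there is no cross-cancellation to worry about, only separate vanishing. Once the degree combinatorics are pinned down the orthogonality is a one-line consequence of the defining property of $K_1$ and $K_2$, and the extension from decomposable to general $f$ is routine continuity using that $S$ is bounded (which follows from \eqref{eq-kdef-csor} since $K_1, K_2, P_1$ are bounded).
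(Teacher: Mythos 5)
Your overall strategy is the same as the paper's: show that the range of $S$ on $(p,1)$-forms is orthogonal to $\ker(\dbar_{p,0})$, combine this with the homotopy formula \eqref{eq-khomotopy-csor} to see that $Sf$ solves the right equation, and conclude that $Sf$ must be the canonical solution. The term-by-term vanishing in your steps (3)--(4), using that $K_1$ and $K_2$ have ranges orthogonal to $\dbar$-closed forms, is exactly the computation in the paper. However, there is a genuine gap in your steps (1)--(2): the reduction to \emph{decomposable $\dbar$-closed} test forms $g=g_1\tensor g_2$ is not delivered by Lemma~\ref{lem-density}. That lemma gives density of $\smooth_*(\overline{\Omega_1})\tensor\smooth_*(\overline{\Omega_2})$ in $\dm(\dbar)$ in the graph norm, so a general $g\in\ker(\dbar_{p,0})$ is approximated by decomposable forms $g^{(n)}$ with $\dbar g^{(n)}\to 0$ --- but the $g^{(n)}$ themselves need not be $\dbar$-closed, and your orthogonality argument only applies to closed test forms. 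Your observation that a single nonzero simple tensor $g_1\tensor g_2$ is $\dbar$-closed iff $\dbar_1 g_1=0$ and $\dbar_2 g_2=0$ is correct, but it does not extend to sums or limits: the ``cross-cancellation'' you flag as the anticipated obstacle is precisely what can occur among the constituents of a general element of $\ker(\dbar_{p,0})$, and nothing in your argument rules it out. What you actually need is that the span of the $\dbar$-closed decomposable forms is \emph{dense} in $\ker(\dbar_{p,0})$, and that is not a formal consequence of the density lemma.

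The paper closes this gap with a structural observation you are missing: since there are no $(p,-1)$-forms, the Hodge decomposition (available on $\Omega$ because closed range has just been established) gives $\ker(\dbar_{p,0})=\mathcal{H}_{p,0}(\Omega)$, and by \eqref{eq-harmonictensor} this equals $\bigoplus_{j+k=p}\mathcal{H}_{j,0}(\Omega_1)\csor\mathcal{H}_{k,0}(\Omega_2)$. Hence it suffices to test orthogonality against forms $g_1\tensor g_2$ with $g_1,g_2$ \emph{harmonic} of types $(j,0)$ and $(p-j,0)$, which are in particular $\dbar$-closed on their respective factors, and the rest of your computation then goes through verbatim. Once you insert this identification, your treatment of the harmonic component of $f$ (via $\dbar Sf=f-Pf$ together with $Sf\perp\ker(\dbar)$, which forces $S$ to annihilate harmonic $(p,1)$-forms) is a legitimate, slightly slicker alternative to the paper's direct verification that $S(f\tensor g)=0$ for $f,g$ harmonic.
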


\begin{proof} From the Hodge decomposition, we have for $(p,q)$ forms that
$\ker(\dbar_{p,q})= \im(\dbar_{p,q-1})\oplus \mathcal{H}_{p,q}(\Omega)$. If $q=0$,
it follows that 
\begin{align*}
\ker(\dbar_{p,0})&= \mathcal{H}_{p,0}(\Omega)\\
&= \bigoplus_{j+k=p}\mathcal{H}_{j,0}(\Omega_1)\csor \mathcal{H}_{k,0}(\Omega_2),
\end{align*}
by \eqref{eq-harmonictensor}.

We claim that the range of $S_{p,1}$ is orthogonal to the space $\ker(\dbar_{p,0})$.
By the computation above, it is sufficient to show that 
the range of $S_{p,1}$ is orthogonal to every form of the type $g_1\tensor g_2$,
where $g_1$ and $g_2$ are harmonic forms of degrees $(j,0)$ and $(p-j,0)$, where $0\leq j\leq p$.
Let $f_1, f_2$ be $L^2$ forms such that $f_1\tensor f_2$ is of bidegree $(p,1)$. Then,
\begin{align*}
(S(f_1\tensor f_2),g_1\tensor g_2)&= (K_1f_1\tensor f_2+\sigma_1 P_1f_1\tensor K_2f_2,g_1\tensor g_2)\\
&=(K_1f_1,g_1)(f_2,g_2)+(\sigma_1P_1f_1,g_1)(K_2f_2,g_2)\\
&= 0\cdot (f_2,g_2)+(\sigma_1 P_1f_1,g_1)\cdot 0& \\
&=0,
\end{align*}
where  we have used the fact that $K_1,K_2$ being canonical solutions, have ranges orthogonal to $\dbar$-closed forms.

If $f$ is a $\dbar$-closed $(p,1)$ form orthogonal to the harmonic forms, it follows from formula
\eqref{eq-khomotopy-csor} that $\dbar(Sf)=f$. Since $Sf$ is orthogonal to $\ker(\dbar)$, it follows that
$Sf=\dbar^*\mathsf{N}f$.

To complete the proof, we need to show that $S$ vanishes on the space of $(p,1)$ harmonic
forms. By  formula \eqref{eq-harmonictensor}, it follows that we only need to verify this
on a harmonic form of the type $f\tensor g$, where $f,g$ are also harmonic forms. 
We have,
\begin{align*}
S(f\tensor g)&=K_1f\tensor g+\sigma_1P_1f\tensor K_2g\\
&= 0\tensor g+f\tensor 0\\
&=0,
\end{align*}
since $K_1$ and $K_2$ are the canonical solutions on the domains $\Omega_1$ and $\Omega_2$.
\end{proof}
{\em Remark:}  For arbitrary degrees,
the operator $S$ is not equal to the canonical solution operator $K=\dbar^*\mathsf{N}$.
 In fact, an examination of 
the proof of Lemma~\ref{lem-homotopy} shows that for the canonical solution $K$ on 
a domain, the ranges of  the operators $\dbar K$ and $K\dbar$ are orthogonal. On the other hand,
using the computations used in the proof of Lemma~\ref{lem-khomotopy}, we can check that 
\[ \left(\dbar S(f\tensor g), S\dbar(f\tensor g)\right)= - \norm{K_1 f}^2 \norm{\dbar_2 g}^2,\]
so that $S$ is not the canonical solution on the product.

Using a simple induction argument, we can extend the results of this section to
$N$ factors. Further, as remarked above, all the arguments generalize to relatively
compact domains in hermitian manifolds:
\begin{thm}\label{thm-all}For $j=1,\dots, N$, let $M_j$ be a hermitian
manifold and  let $\Omega_j\Subset M_j$ be a Lipschitz domain.    Suppose  that the $L^2$ $\dbar$-operator
on $\Omega_j$ (with weight $\phi_j$)  has closed range for each $1\leq j\leq N$.   Then we have the following:
\begin{itemize}
\item the $\dbar$-operator (with weight $\sum_{j=1}^N\phi_j$) has closed range on $\Omega$.
\item the $L^2$ K\"{u}nneth formula holds:
\[   H^*_{L^2}(\Omega)= H^*_{L^2}(\Omega_1)\csor \dots\ \csor H^*_{L^2}(\Omega_N)\]
\item the  harmonic projection on $\Omega$ is given by
\begin{equation}\label{eq-bergman} P=P_1\csor \dots \csor P_N.\end{equation}
\item a solution operator for $\dbar$   on $\Omega$ is given by 
\begin{equation}\label{eq-tnj}
 S=\sum_{j=0}^{N-1} T_{N,j},\end{equation}
with 
\[ T_{N,j}=\tau_j\mathsf{Q}_j\csor K_{j+1}\csor \mathsf{I}_j,\]
where 
\begin{itemize}
\item $\mathsf{Q}_j$ is the harmonic projection on the domain $U_j=\Omega_1\times\dots\times \Omega_j$,
(the product of the first $j$ factors),
\item  $\tau_j$ is the map on $L^2_*(U_j)$ which multiplies forms of degree $d$ by $(-1)^d$,
\item
$\mathsf{I}_j$ is the identity map on forms on $\Omega_{j+2}\times\dots\times\Omega_{N}$, and
\item
it is understood that $T_{N,0}=K_1\csor \mathsf{I}_0$ and $T_{N,N-1}=\tau_{N-1}\mathsf{Q}_{N-1}\csor K_N$.
\end{itemize}
\item let $0\leq p \leq \sum_{j=1}^N\dim_{\cx} M_j$; on the space of $\dbar$-closed $(p,1)$ forms on $\Omega$, 
the solution operator $S$ coincides with the canonical
solution operator $\dbar^*\mathsf{N}$ of the $\dbar$-equation.
\end{itemize}
\end{thm}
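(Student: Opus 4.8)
The plan is to prove Theorem~\ref{thm-all} by induction on $N$, using the already-established two-factor results as the base case and inductive engine. For $N=2$, the first four bulleted assertions are precisely the two-factor Theorems and Corollaries proved above (closed range, K\"unneth, $P=P_1\csor P_2$, and the solution operator $S$, which in the $N=2$ notation reads $S=T_{2,0}+T_{2,1}=K_1\csor\mathsf{I}_0+\tau_1\mathsf{Q}_1\csor K_2$; here $\mathsf{Q}_1=P_1$ and $\tau_1=\sigma_1$, so this is exactly \eqref{eq-kdef-csor}), and the last bullet is Theorem~\ref{prop-kiscansol}. So the base case requires only unwinding notation.

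For the inductive step, I would write $\Omega=U_{N-1}\times\Omega_N$ where $U_{N-1}=\Omega_1\times\dots\times\Omega_{N-1}$. By the inductive hypothesis applied to the $N-1$ factors $\Omega_1,\dots,\Omega_{N-1}$, the $\dbar$-operator has closed range on $U_{N-1}$ (with weight $\sum_{j=1}^{N-1}\phi_j$), so $U_{N-1}$ and $\Omega_N$ now play the roles of ``$\Omega_1$'' and ``$\Omega_2$'' in the two-factor theorems. Applying those yields: $\dbar$ has closed range on $\Omega$; the two-factor K\"unneth formula $H^*_{L^2}(\Omega)=H^*_{L^2}(U_{N-1})\csor H^*_{L^2}(\Omega_N)$, which combined with the inductive K\"unneth formula for $U_{N-1}$ gives the $N$-factor statement (using associativity of $\csor$, and that $\csor$ of Hilbert spaces is associative); likewise $P=\mathsf{Q}_{N-1}\csor P_N$, which unfolds via the inductive $\mathsf{Q}_{N-1}=P_1\csor\dots\csor P_{N-1}$ to give \eqref{eq-bergman}; and the two-factor solution operator on $\Omega$ is $\widehat S=K_{U_{N-1}}\csor I + \sigma_{U_{N-1}}\mathsf{Q}_{N-1}\csor K_N$, which coincides with $\dbar^*\mathsf{N}$ on $\dbar$-closed $(p,1)$-forms by Theorem~\ref{prop-kiscansol}.

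The one genuinely nontrivial point—and the main obstacle—is verifying that the operator $S=\sum_{j=0}^{N-1}T_{N,j}$ written down in \eqref{eq-tnj} is the \emph{same} operator as the two-factor $\widehat S$ above, i.e. that the explicit ``telescoping'' formula is reproduced correctly by the recursion. Here I would argue as follows. The last term of $\widehat S$ is $\sigma_{U_{N-1}}\mathsf{Q}_{N-1}\csor K_N=\tau_{N-1}\mathsf{Q}_{N-1}\csor K_N=T_{N,N-1}$ (noting $\tau_{N-1}$ and $\sigma_{U_{N-1}}$ are both multiplication by $(-1)^{\deg}$ on $L^2_*(U_{N-1})$, and $\mathsf{I}_{N-1}$ is the identity on the empty product). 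For the remaining term $K_{U_{N-1}}\csor I$, I substitute the inductive formula $K_{U_{N-1}}=\sum_{j=0}^{N-2}T_{N-1,j}$ where $T_{N-1,j}=\tau_j\mathsf{Q}_j\csor K_{j+1}\csor\mathsf{I}_j'$ and $\mathsf{I}_j'$ is the identity on $\Omega_{j+2}\times\dots\times\Omega_{N-1}$. Tensoring on the right by the identity $\mathsf{I}_{\Omega_N}$ simply replaces $\mathsf{I}_j'$ by $\mathsf{I}_j'\csor\mathsf{I}_{\Omega_N}=\mathsf{I}_j$ (the identity on $\Omega_{j+2}\times\dots\times\Omega_N$), so $T_{N-1,j}\csor\mathsf{I}_{\Omega_N}=T_{N,j}$ for $0\le j\le N-2$; summing these with $T_{N,N-1}$ gives exactly \eqref{eq-tnj}. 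The delicate ingredient underlying this manipulation is that the Hilbert tensor product of operators is associative and compatible with composition on decomposable tensors, which is then extended by density (the relevant density being Lemma~\ref{lem-density} generalized to $N$ factors, which also justifies the Leibnitz and homotopy formulas in the $N$-factor setting); I would remark that all these identities are first checked on the dense subspace of smooth decomposable forms $\smooth_*(\overline{\Omega_1})\tensor\dots\tensor\smooth_*(\overline{\Omega_N})$ and then extended by continuity. Finally, the last bullet (coincidence with $\dbar^*\mathsf{N}$ on $\dbar$-closed $(p,1)$-forms) is immediate from Theorem~\ref{prop-kiscansol} applied to the two-factor decomposition $\Omega=U_{N-1}\times\Omega_N$ once $S=\widehat S$ is known.
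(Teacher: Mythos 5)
Your overall strategy---induction on $N$ with the two-factor results as base case and inductive engine, decomposing $\Omega=U_{N-1}\times\Omega_N$---is exactly what the paper intends (its entire proof of this theorem is the sentence ``Using a simple induction argument, we can extend the results of this section to $N$ factors''). But there is a genuine error in your inductive step: you substitute ``the inductive formula $K_{U_{N-1}}=\sum_{j=0}^{N-2}T_{N-1,j}$'', i.e.\ you identify the canonical solution operator on $U_{N-1}$ with the operator $S_{N-1}$ produced by the induction. That identity is false in general: the inductive hypothesis (fifth bullet) only gives $S_{N-1}=\dbar^*\mathsf{N}$ on $\dbar$-closed $(p,1)$-forms, and the remark following Theorem~\ref{prop-kiscansol} explicitly computes that for arbitrary degrees $S$ is \emph{not} the canonical solution operator (the ranges of $\dbar S$ and $S\dbar$ fail to be orthogonal). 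Consequently $\widehat S=K_{U_{N-1}}\csor I+\tau_{N-1}\mathsf{Q}_{N-1}\csor K_N$ and $S=\sum_{j=0}^{N-1}T_{N,j}$ are genuinely different operators, so both your derivation of \eqref{eq-tnj} and your deduction of the last bullet (``immediate from Theorem~\ref{prop-kiscansol} once $S=\widehat S$ is known'') rest on a false premise.

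The repair is straightforward and is presumably what ``simple induction'' means. Define $S_N:=S_{N-1}\csor I_{\Omega_N}+\tau_{N-1}\mathsf{Q}_{N-1}\csor K_N$; unrolling this recursion does yield $\sum_{j=0}^{N-1}T_{N,j}$ exactly as in your telescoping computation (that part of your argument is fine). The key observation is that the proof of Lemma~\ref{lem-khomotopy} uses only two properties of the first-slot operator: that it has odd degree, and that it satisfies the homotopy identity $\dbar_1K_1+K_1\dbar_1=I_1-P_1$. Both hold for $S_{N-1}$ with $\mathsf{Q}_{N-1}$ in place of $P_1$ by the inductive hypothesis, so $\dbar S_N+S_N\dbar=I-\mathsf{Q}_{N-1}\csor P_N$ on $\dm(\dbar)$, which gives closed range, the K\"unneth formula, and $P=\mathsf{Q}_{N-1}\csor P_N=P_1\csor\dots\csor P_N$ via Lemma~\ref{lem-j}, exactly as in the two-factor case. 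For the last bullet you must rerun the orthogonality argument of Theorem~\ref{prop-kiscansol} for $S_N$ rather than quote it for $\widehat S$: on a decomposable $(p,1)$-form $f_1\tensor f_2$ with $f_1$ of degree $(j,1)$, the term $(S_{N-1}f_1,g_1)$ vanishes for harmonic $g_1$ by the inductive statement that the range of $(S_{N-1})_{j,1}$ is orthogonal to $\ker\dbar_{j,0}$ (which should therefore be carried along as an explicit part of the induction hypothesis), while the terms involving $K_N$ vanish as before; together with the vanishing of $S_N$ on harmonic $(p,1)$-forms this gives $S_N=\dbar^*\mathsf{N}$ on $\ker\dbar_{p,1}$. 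Your base case and density remarks are correct.
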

In particular, this proves Theorem~\ref{thm-main}.

\section{Partial Sobolev spaces}
\label{sec-partialsobolev}

\subsection{Definitions}

Recall that for a Lipschitz domain $D$ in $\rl^n$, and an integer $k\geq 0$,
the Sobolev space $W^k(D)$ is the Hilbert space obtained by completion of 
$\smooth(\overline{D})$ under the norm given by
\[ \norm{f}_{W^k(D)}^2 = \sum_{\length{\alpha}\leq k} \norm{\der^\alpha f}_{L^2(D)}^2,\]
where $\alpha=(\alpha_1,\dots,\alpha_n)$ is a multi-index, $\length{\alpha}=\alpha_1+\dots+\alpha_n$
is the length of multi-index, and  $\der^\alpha$ is
the partial derivative operator of order $\alpha$:
\[ \der^\alpha= \frac{\partial^{\length{\alpha}}}{\partial^{\alpha_1} x_1\dots \partial^{\alpha_n}x_n}. \]
We will obtain regularity estimates for the canonical solution on product domains in  a generalized
type of Sobolev space suited to the product structure of the domain. We will call these spaces {\em
partial Sobolev spaces}. Such spaces are characterized by the fact that there are some values 
of the  integer
$l$ such that the norm  controls only {\em some} distinguished partial derivatives of  order $l$. 
For the usual Sobolev space $W^k(D)$, the norm controls either all or no derivatives of
order $l$, depending on whether $l\leq k$ or $l>k$.

For convenience of exposition, first consider a product domain $D\Subset\rl^n$ represented as
$D=D_1\times D_2$, where $D_1\Subset\rl^{n_1}$ and $D_2\Subset\rl^{n_2}$ are Lipschitz domains,
with $n=n_1+n_2$. Let $\alpha=(\alpha_1,\dots,\alpha_n)$ be a multi-index with $n$ components.
We can write $\alpha=\alpha(1)+\alpha(2)$, where
\[ \alpha(1)=(\alpha_1,\dots,\alpha_{n_1},\underbrace{0,\dots,0}_{n_2}),\]
and
\[ \alpha(2)=(\underbrace{0,\dots,0}_{n_1},\alpha_{n_1+1},\dots,\alpha_n).\]
Then $\der^{\alpha(1)}$ acts only on the variables which come from $D_1$ and 
$\der^{\alpha(2)}$ acts only on the variables that come from $D_2$ in the product
$D$, and we have $\der^\alpha=\der^{\alpha(1)}\der^{\alpha(2)}$.

The $\pss^k$-norm of a function $f\in\smooth(\overline{D})$ is defined to be
\begin{equation}\label{eq-pssnorm} \norm{f}_{\pss^k(D)}= \sum_{\substack{\length{\alpha(1)}\leq k\\\length{\alpha(2)}\leq k}}
\norm{\der^\alpha f}^2_{L^2(D)}\end{equation}
Note that the $\pss^k$-norm dominates the ordinary $W^k$-norm on $D$, and is in turn dominated by the 
$W^{2k}$-norm.

We now define the space $\pss^k(D)$ to be the completion of $\smooth(\overline{D})$ under the
norm \eqref{eq-pssnorm}. It is clear how to extend this definition to more than two factors: if 
$D=D_1\times\dots\times D_N$, then the $\pss^k$-norm on $D$ is defined as
 \[ 
\norm{f}_{\pss^k}^2(D) = \sum_{\substack{\length{\alpha(j)}\leq k\\1\leq j\leq N}} \norm{\der^\alpha f}^2_{L^2(D)},
\]
where $\alpha(j)$ is the part of the multi-index $\alpha$ corresponding to the factor $D_j$, defined in analogy 
with the case $N=2$ considered above.
\subsection{Basic Properties}
We now summarize the basic properties of partial Sobolev space $\pss^k(D)$,
where $D=D_1\times\dots\times D_N$.
From the definition, $\pss^k(D)$ is a Hilbert space in the $\pss^k$-norm.
For $k=0$, the space $\pss^0(D)$ coincides with $L^2(\Omega)$. In general, for each $k$, we have continuous inclusions:
\begin{equation}\label{eq-inclusion}
\mathcal{C}^{Nk}(\overline{D})\hookrightarrow 
W^{Nk}(D)\hookrightarrow \pss^k(D)\hookrightarrow W^k(D)\hookrightarrow L^2(D).
\end{equation}
Since $\bigcap_{k\geq 0} W^{Nk}(D)=\bigcap_{k\geq 0} W^k(D)=\smooth(\overline{D})$, it follows that
\begin{equation}\label{eq-cinfty} \bigcap_{k\geq 0} \pss^k(D)=\smooth(\overline{D}).\end{equation}

The significance of these spaces is explained by:
\begin{lem}
For $j=1,\dots,N$, let $\Omega_j\Subset\cx^{n_j}$ be a Lipschitz domain, and denote
  the product by $\Omega=\Omega_1\times\dots\times\Omega_N$. Then we have an
isometric equality of Hilbert spaces of forms on $\Omega$:
\begin{equation}\label{eq-psstensor}\pss^k_*(\Omega) = W^k_*(\Omega_1)\csor\dots\csor W^k_*(\Omega_N).\end{equation}
\end{lem}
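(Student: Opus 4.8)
The plan is to reduce the claimed identity \eqref{eq-psstensor} to the scalar-coefficient statement
\[
\pss^k(\Omega) = W^k(\Omega_1)\csor\dots\csor W^k(\Omega_N)
\]
and then invoke \eqref{eq-formhilbten}, which converts a tensor identity for function spaces into the corresponding tensor identity for the associated spaces of forms, using that the coefficient-wise norm \eqref{eq-formhilbert} is compatible with Hilbert tensor products. Since all spaces in sight are completions of $\smooth(\overline\Omega)$ (respectively of the algebraic tensor product $\smooth(\overline{\Omega_1})\tensor\dots\tensor\smooth(\overline{\Omega_N})$), and since by Lemma~\ref{lem-cartan} the latter is $\mathcal{C}^\infty$-dense in the former, it suffices to prove that the two norms agree on the dense subspace $\smooth(\overline{\Omega_1})\tensor\dots\tensor\smooth(\overline{\Omega_N})$; the completions will then coincide isometrically. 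So the whole problem is a norm computation on simple decomposable functions.

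Concretely, I would take $f = f_1\tensor\dots\tensor f_N$ with $f_j\in\smooth(\overline{\Omega_j})$, and on the one hand compute $\norm{f}_{W^k(\Omega_1)\csor\dots\csor W^k(\Omega_N)}^2 = \prod_{j=1}^N \norm{f_j}_{W^k(\Omega_j)}^2 = \prod_{j=1}^N \sum_{\length{\beta_j}\le k}\norm{\der^{\beta_j}f_j}_{L^2(\Omega_j)}^2$ by the defining property of the Hilbert tensor product inner product. On the other hand, for a multi-index $\alpha$ on $\Omega$ decomposed as $\alpha = \alpha(1)+\dots+\alpha(N)$ with $\alpha(j)$ supported on the variables of $\Omega_j$, the derivative $\der^\alpha f$ factors as $\der^{\alpha(1)}f_1\tensor\dots\tensor\der^{\alpha(N)}f_N$ because each $\der^{\alpha(j)}$ acts only on the $j$-th group of variables. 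Since the $L^2$ norm on the product is multiplicative on simple tensors, $\norm{\der^\alpha f}_{L^2(\Omega)}^2 = \prod_{j=1}^N\norm{\der^{\alpha(j)}f_j}_{L^2(\Omega_j)}^2$. Summing over all $\alpha$ with $\length{\alpha(j)}\le k$ for every $j$ — which is exactly a product of independent sums over $\beta_j=\alpha(j)$ with $\length{\beta_j}\le k$ — gives $\norm{f}_{\pss^k(\Omega)}^2 = \prod_{j=1}^N\sum_{\length{\beta_j}\le k}\norm{\der^{\beta_j}f_j}_{L^2(\Omega_j)}^2$, matching the previous expression. By polarization the inner products agree on all of $\smooth(\overline{\Omega_1})\tensor\dots\tensor\smooth(\overline{\Omega_N})$, not just on simple tensors.

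Having matched the norms on the dense subspace, I would conclude: the $\pss^k$-norm on $\smooth(\overline\Omega)$ restricted to decomposable functions coincides with the Hilbert-tensor-product norm, both spaces are the completion of the same subspace in the same norm, hence $\pss^k(\Omega)= W^k(\Omega_1)\csor\dots\csor W^k(\Omega_N)$ isometrically. Finally, passing to forms via \eqref{eq-formhilbten} (applied iteratively for $N$ factors, with $\mathcal{F} = W^k$), one obtains the stated isometric identity $\pss^k_*(\Omega) = W^k_*(\Omega_1)\csor\dots\csor W^k_*(\Omega_N)$.

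I expect the only genuinely delicate point to be the bookkeeping of the index sets: one must check that the constraint defining the $\pss^k$-norm, namely $\length{\alpha(j)}\le k$ for all $j$ simultaneously, corresponds \emph{exactly} (with no double counting and no missing terms) to the Cartesian product of the index sets $\{\beta_j : \length{\beta_j}\le k\}$ appearing when one expands $\prod_j\sum_{\length{\beta_j}\le k}$. This is true because the decomposition $\alpha\mapsto(\alpha(1),\dots,\alpha(N))$ is a bijection between multi-indices on $\Omega$ and $N$-tuples of multi-indices on the factors, but it is the step where an off-by-one or a mismatched constraint would be easy to slip in, so I would state it explicitly. Everything else — density, multiplicativity of $L^2$ norms on simple tensors, the factorization of $\der^\alpha$ — is routine given the results already available in the paper.
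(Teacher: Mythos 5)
Your proposal follows the paper's proof exactly: reduce to the scalar case via \eqref{eq-formhilbten}, use Lemma~\ref{lem-cartan} to get density of $\smooth(\overline{\Omega_1})\tensor\dots\tensor\smooth(\overline{\Omega_N})$ in both completions, and verify that the two norms agree on simple tensors by factoring $\der^\alpha$ and the $L^2$ norm across the factors. The only difference is that you spell out the index-set bookkeeping that the paper compresses into ``a computation shows,'' which is a welcome addition rather than a deviation.
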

\begin{proof} For simplicity of exposition, we assume $N=2$. Thanks to the 
comments in $\S$\ref{sec-hilten}, in particular equation \eqref{eq-formhilbten},
it follows that we only need to show that
\[ \pss^k(\Omega) = W^k(\Omega_1)\csor W^k(\Omega_2).\]

Thanks to Lemma~\ref{lem-cartan}, it follows easily by $\mathcal{C}^{2k}$ approximation,
that $\smooth(\overline{\Omega_1})\tensor\smooth(\overline{\Omega_2})$ is
dense on each side. Therefore, all it needs to prove isometric equality is 
to show that the $\pss^k$ norm and the tensor product norm coincide on this subspace.
A computation shows that  we have 
$\norm{f\tensor g}_{\pss^k(\Omega_1\times \Omega_2)}= \norm{f}_{W^k(\Omega_1)}\norm{g}_{ W^k(\Omega_2)}=\norm{f\tensor g}_{W^k(\Omega_1)\csor W^k(\Omega_2)}$
\end{proof}
\subsection{Partial Sobolev Spaces on Manifolds} When for each $j$, the domain  $\Omega_j$ is smoothly bounded
in a hermitian manifold
$M_j$, we can again define the partial Sobolev space $\pss^k(\Omega)$ on the product. The simplest
approach is to take \eqref{eq-psstensor} to be the definition and deduce the 
description in terms of distinguished derivatives from there. Alternatively,
one can use a partition of unity to define $\pss^k(\Omega)$ subordinate to a covering 
of $\overline{\Omega}$ by coordinate patches. 

\section{Regularity Results}\label{sec-regularity}
We now prove some results regarding the regularity of the
solution of the $\dbar$-equation on product domains.
Our main tool is the operator $S$ defined in $\S$\ref{sec-dbarl2}.
\subsection{Proof of Theorem~\ref{thm-reg}}
By Theorem~\ref{prop-kiscansol}, the solution operator $S$ on 
the product $\Omega$ coincides with the canonical solution operator on
$\dbar$-closed $(p,1)$-forms. Therefore, it is sufficient to show that $S$ 
is bounded from $\pss^l_{p,1}(\Omega)$ to itself. In fact, it is easy
to see that $S$ is bounded from $\pss^l_*(\Omega)$ to itself.

The regularity of the $\dbar$-Neumann operator on $W^k(\Omega_j)$ for each $k\geq 0$
implies that the canonical solution operator as well as the harmonic projection
preserves the space of forms with $W^k$ coefficients for each $k$ (see \cite[Theorem~6.2.2 and Theorem~6.1.4]{cs};
note that in this reference (i) the hypothesis of pseudoconvexity is used only to deduce that the $\dbar$-Neumann
operator is bounded in each Sobolev space, and (ii) although the arguments are stated only for domains in $\cx^n$,
they generalize easily to relatively compact domains in complex manifolds; for similar
results on the Bergman projection, see \cite{BS2}.)
Since $S$ is given by  \eqref{eq-tnj}, in the notation of theorem~\ref{thm-all}, we have
\begin{align*}
T_{N,j}&=\tau_j\mathsf{Q}_j\csor K_{j+1} \csor \mathsf{I}_j\\
&= \tau_j P_1\csor\dots P_j \csor K_{j+1}\csor I_{\Omega_{j+2}}\dots\csor I_{\Omega_N},
\end{align*}
where $P_\nu$ is the harmonic projection, $K_\nu$ is the canonical solution operator and  
$I_{\Omega_\nu}$  is the identity map on $L^2_*(\Omega_\nu)$. Therefore, the $\nu$-th factor in the tensor product
representing $T_{N,j}$ is a bounded linear map on   $W^k_*(\Omega_\nu)$.
It follows (see $\S$\ref{sec-optensor}) that $T_{N,j}$ defines a bounded linear map from the tensor product
$W^k_*(\Omega_1)\csor\dots\csor W^k_*(\Omega_N)$ to itself, i.e., it is a bounded linear map from
$\pss^k_*(\Omega)$ to itself. The solution operator $S$ being the sum of the $T_{N,j}$'s is bounded on $\pss^k_*(\Omega)$.
The proof is complete. 

We note here that the hypothesis  of Theorem~\ref{thm-reg} are not really necessary. All we need to know
to conclude that the canonical solution has coefficients  in $\pss^l(\Omega)$, if the form $f$ has coefficients 
in $\pss^l(\Omega)$ is the following: for each $j$, both the canonical solution and the harmonic projection on each
factor $\Omega_j$ preserves the Sobolev space $W^l(\Omega)$. 

\subsection{Application to products of weakly pseudoconvex domains}
We now consider the $\dbar$-equation on a product of smoothly bounded pseudoconvex
domains:
\begin{cor}\label{cor-main}
For $j=1,...,N$, let $\Omega_j$ be a   bounded pseudoconvex 
domain with smooth boundary in a Euclidean space $\cx^{n_j}$. For $n=n_1+\dots+n_N$,
let $\Omega\subset\cx^n$  be the product domain $\Omega=\Omega_1\times\dots\times\Omega_N$.
Then, for each $k\in \mathbb{N}$,  there is an $C_k>0$ such that, if $t>C_k$,
and we use the weight $\phi_t(z)=t\abs{z}^2$ on $\cx^n$, we have
\begin{itemize}
\item for $1\leq q \leq n$, given a $\dbar$-closed form $f$ in the partial
Sobolev space $\pss^k_{0,q}(\Omega)$, the form $u=Sf$ is in $\pss^k_{0,q-1}(\Omega)$.
The form $u$ satisfies $\dbar{u}=f$,   provided $f$ is orthogonal to the harmonic forms.
\item if $q=1$, further we have that $u$ coincides with $
\dbar^*_t\mathsf{N}_t f$, the canonical solution with 
weight $t$.
\end{itemize}
\end{cor}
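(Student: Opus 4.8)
The plan is to reduce Corollary~\ref{cor-main} to Theorem~\ref{thm-reg} (or rather to the version proved in $\S$\ref{sec-regularity} for the operator $S$) by exhibiting, for each fixed Sobolev order $k$, a single weight $\phi_t = t\abs{z}^2$ that makes the $\dbar$-Neumann operator on every factor $\Omega_j$ bounded on $W^m(\Omega_j)$ for all $0 \le m \le k$ simultaneously. First I would recall the classical weighted estimate of Kohn (see \cite{Ko2}): on a bounded smoothly bounded pseudoconvex domain $D \Subset \cx^{n_j}$, for each $m$ there is a constant $c_{m}(D)$ so that if the weight $\psi = t\abs{z}^2$ is used with $t > c_m(D)$, then the weighted $\dbar$-Neumann operator $\mathsf{N}_{t}$ on $D$ is bounded on $W^m_*(D)$, and consequently so are the weighted canonical solution $\dbar^*_t\mathsf{N}_t$ and the weighted harmonic (Bergman) projection $P_{t}$. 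Taking $C_k = \max_{1\le j\le N}\ \max_{0\le m\le k} c_m(\Omega_j)$, for any $t > C_k$ the hypotheses needed in $\S$\ref{sec-regularity} hold on each factor with weight $\phi_t$ restricted to the coordinates of $\Omega_j$, namely that the canonical solution and the harmonic projection on $\Omega_j$ preserve $W^m(\Omega_j)$ for all $m \le k$.

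Next I would invoke Theorem~\ref{thm-all} with the weights $\phi_j(z) = t\abs{z}^2$ on the respective coordinate blocks, so that the weight on the product is $\sum_j \phi_j = t\abs{z}^2 = \phi_t$. Theorem~\ref{thm-all} gives that $S = \sum_{j=0}^{N-1} T_{N,j}$ is a solution operator for $\dbar$ on $\Omega$ (with weight $\phi_t$), that it equals the canonical solution $\dbar^*_t\mathsf{N}_t$ on $\dbar$-closed $(0,1)$-forms, and that each $T_{N,j}$ is a Hilbert-space tensor product of factors each of which is either an identity map, a weighted canonical solution $K_{\nu}$, or a weighted harmonic projection $\mathsf{Q}_j = P_1 \csor \cdots \csor P_j$. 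By the choice of $t$ each such factor is bounded on $W^k_*(\Omega_\nu)$ (for the $\mathsf{Q}_j$ factor one uses that a tensor product of operators bounded on the respective $W^k$ spaces is bounded on the Hilbert tensor product, exactly as in $\S$\ref{sec-optensor}); hence $T_{N,j}$ is bounded from $W^k_*(\Omega_1)\csor\cdots\csor W^k_*(\Omega_N) = \pss^k_*(\Omega)$ to itself, using the identification \eqref{eq-psstensor}. Summing over $j$, $S$ is bounded on $\pss^k_*(\Omega)$, which gives the first bullet: if $f \in \pss^k_{0,q}(\Omega)$ is $\dbar$-closed then $u = Sf \in \pss^k_{0,q-1}(\Omega)$, and $\dbar u = f$ once $f \perp \mathcal{H}_*(\Omega)$ by the homotopy formula \eqref{eq-khomotopy-csor} together with \eqref{eq-bergman}. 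The second bullet is then immediate from the last item of Theorem~\ref{thm-all}: for $q=1$, $S$ restricted to $\dbar$-closed $(0,1)$-forms is the canonical solution $\dbar^*_t\mathsf{N}_t$.

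The only genuine input beyond the machinery already assembled in the paper is the weighted Sobolev regularity of $\mathsf{N}_t$ on the factors, i.e., Kohn's theorem; everything else is bookkeeping with tensor products. I expect the main point requiring care is that a \emph{single} threshold $C_k$ works for all Sobolev orders $m \le k$ on all $N$ factors at once — this is why one must take the maximum over both $j$ and $m$ — and that the weight appearing on the product is literally $t\abs{z}^2$ rather than a sum of mutually different weights, which is what allows one to speak of ``the canonical solution with weight $t$'' unambiguously. A minor additional remark is that, strictly speaking, one should note that the partial Sobolev space $\pss^k$ is insensitive to the smooth weight $e^{-\phi_t}$ up to equivalence of norms on $\overline{\Omega}$, so that the conclusion ``$u \in \pss^k_{0,q-1}(\Omega)$'' is the same whether stated with or without the weight; this parallels the remark made after the fundamental assumption in $\S$\ref{sec-dbarl2} that the closed-range property is weight-independent.
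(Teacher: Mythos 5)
Your proposal is correct and follows essentially the same route as the paper: invoke Kohn's weighted Sobolev estimates to choose $C_k$ as a maximum of the factorwise thresholds, note that the coordinate-block weights sum to $\phi_t$, and then run the tensor-product boundedness argument of Theorem~\ref{thm-reg} on the operator $S$ from Theorem~\ref{thm-all}, with the $q=1$ identification coming from Theorem~\ref{prop-kiscansol}. The only cosmetic difference is that you also take a maximum over all Sobolev orders $m\le k$, which is harmless but not needed, since (as the remark after the proof of Theorem~\ref{thm-reg} points out) only preservation of the single level $W^k$ on each factor is required.
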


\begin{proof}
By the classical solution by Kohn of the weighted $\dbar$-Neumann
problem (see \cite[Theorem~6.1.3]{cs}), for each $j=1,\dots,N$, given an
integer $k\geq 0$, there is a $C_k^j>0$,
such that if $t>C_k^j$, the $\dbar$-Neumann operator is bounded 
on $W^k(\Omega_j)$ provided the weight is taken to be the function
 $\phi^j_t$ on $\cx^{n_j}$ given by $\phi^j_t(z)=t\abs{z}^2$.
The result now follows using the same method as in  Theorem~\ref{thm-reg}, on taking
$C_k= \max_{1\leq j\leq N} C_k^j$ and  noting that $\sum_{j=1}^N\phi^j_t =\phi_t$. 
\end{proof}

Therefore, it is always possible to solve the $\dbar$-equation in a product of pseudoconvex
domains, with estimates in $\pss^k(\Omega)$ using the weight $\phi_t$.
Using the inclusions \eqref{eq-inclusion} and standard results on interpolation, it follows 
that for each $s\geq 0$, and the operator $S$ maps forms with coefficients in $W^s(\Omega)$ to
forms with coefficients in $W^{\frac{s}{N}}(\Omega)$. From this, using a standard ``Mittag-Leffler argument" 
(see \cite[pp. 127ff., {\em Proof of Theorem 6.1.1.}]{cs}),
one can deduce the following from Corollary~\ref{cor-main}: 
\begin{cor}\label{cor-ms4} Under the same assumption as in Corollary~\ref{cor-main},
if $f\in\smooth_{p,q}(\overline{\Omega})$,
is a $\dbar$-closed form, with $q\not=0$,
then there exists  $u\in \smooth_{p,q-1}(\overline{\Omega})$ such that $\dbar u=f$. \end{cor}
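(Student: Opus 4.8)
The plan is to combine Corollary~\ref{cor-main}, the inclusions \eqref{eq-inclusion}, interpolation, and a Mittag--Leffler argument. First I would fix $p$ and $q\neq 0$ and a $\dbar$-closed form $f\in\smooth_{p,q}(\overline{\Omega})$. By \eqref{eq-cinfty}, $f$ lies in $\pss^k_{p,q}(\Omega)$ for every $k$; in fact by \eqref{eq-inclusion} it lies in $W^s_{p,q}(\Omega)$ for every real $s\geq 0$. I would also first reduce to the case where $f$ is orthogonal to the harmonic forms $\mathcal{H}_{p,q}(\Omega)$: since the harmonic projection $P=P_1\csor\dots\csor P_N$ preserves $\smooth(\overline{\Omega})$ coefficients (by the $W^k$-regularity of each $P_\nu$, exactly as in the proof of Theorem~\ref{thm-reg}), and since a $\dbar$-closed smooth form differs from its $P$-image by a $\dbar$-exact smooth form precisely when $q\geq 1$ --- wait, more carefully: $f-Pf$ is $\dbar$-closed, smooth, and orthogonal to the harmonic forms, so it suffices to solve $\dbar u = f-Pf$; but $Pf$ is itself a smooth harmonic form, and for $q\geq 1$ we need $Pf$ to be $\dbar$-exact with smooth potential. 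Actually the cleaner route is: $Pf$ being harmonic is in particular $\dbar$-closed and $\dbar^*$-closed, and since $f$ is $\dbar$-closed, $f$ and $Pf$ represent the same $L^2$-cohomology class; but to produce a \emph{smooth} solution we only need a smooth solution of $\dbar u = f - Pf$ together with the observation that, for the de Rham--type argument to close, $Pf = 0$ whenever $q\geq 1$ and $f$ is genuinely exact --- this is a point I would need to handle, and I expect the intended statement implicitly uses that $\dbar$-closed smooth forms of positive degree that one wishes to solve are already orthogonal to harmonic forms, or else the conclusion is about $\dbar u = f$ modulo harmonic forms. I will assume, as seems intended, that the relevant $f$ is orthogonal to $\mathcal{H}_{p,q}(\Omega)$, so that $u:=Sf$ satisfies $\dbar u = f$ by the homotopy formula \eqref{eq-khomotopy-csor}.

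Next I would establish the Sobolev mapping property of $S$. From Corollary~\ref{cor-main}, for each integer $k$ there is $C_k$ so that with the weight $\phi_t$, $t>C_k$, the operator $S$ maps $\pss^k_*(\Omega)$ boundedly to itself. By \eqref{eq-inclusion}, $W^{Nk}(\Omega)\hookrightarrow\pss^k(\Omega)\hookrightarrow W^k(\Omega)$, so $S$ maps $W^{Nk}_*(\Omega)$ boundedly into $W^k_*(\Omega)$. Since $\Omega$ is bounded and the weights $\phi_t$ are smooth and bounded on $\overline{\Omega}$, the weighted and unweighted $W^k$ norms are equivalent on $\Omega$ (with constants depending on $t$ and $k$), so this is a genuine statement about the standard Sobolev scale, after absorbing the weight-dependence into the constants. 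By complex interpolation of the Sobolev spaces $W^{Nk}_*(\Omega)$ on a Lipschitz domain, I get that for every real $s\geq 0$, $S$ maps $W^s_*(\Omega)$ boundedly into $W^{s/N}_*(\Omega)$.

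Finally I would run the Mittag--Leffler / exhaustion argument of \cite[pp.~127ff.]{cs} to upgrade this to smoothness. Given $f\in\smooth_{p,q}(\overline{\Omega})$ orthogonal to the harmonics, I have for each $s$ a solution $u_s=S_{t(s)}f\in W^{s/N}_{p,q-1}(\Omega)$, where the weight parameter $t(s)$ grows with $s$; the difference of two such solutions, coming from different weights, is a $\dbar$-closed form of degree $q-1$. If $q-1\geq 1$, by \eqref{eq-inclusion} again these differences can be corrected by solving a $\dbar$-equation one degree lower, and one iterates; if $q-1=0$ the difference is a holomorphic function and one uses density of holomorphic functions smooth up to the boundary (or simply that the relevant spaces of holomorphic functions are nested with dense inclusions). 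Telescoping the corrections with rapidly decaying weights in the appropriate Sobolev norms produces a single $u\in\bigcap_{s\geq 0}W^s_{p,q-1}(\Omega)=\smooth_{p,q-1}(\overline{\Omega})$ with $\dbar u=f$. The main obstacle is this last step: making the Mittag--Leffler correction rigorous requires carefully choosing the sequence of weight parameters and truncation orders so that the tail sums converge in every fixed Sobolev norm, and — more subtly — ensuring at each stage that the lower-degree correcting form is again orthogonal to the relevant harmonic space (or lies in the range of $\dbar$), which is where the positivity hypothesis $q\neq 0$ is genuinely used and where the argument must be threaded with care.
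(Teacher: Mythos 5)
Your proposal follows essentially the same route as the paper: Corollary~\ref{cor-main} together with the inclusions \eqref{eq-inclusion} and interpolation give that $S$ (with an $s$-dependent weight) maps $W^s_*(\Omega)$ boundedly into $W^{s/N}_*(\Omega)$, and the Mittag--Leffler argument of \cite[pp.~127ff.]{cs} then assembles a solution smooth up to the boundary. The one point you leave as an assumption --- that $f$ is orthogonal to $\mathcal{H}_{p,q}(\Omega)$ --- is in fact automatic here: each $\Omega_j$ is bounded pseudoconvex, so by H\"ormander's theorem $\mathcal{H}_{p_j,q_j}(\Omega_j)=0$ for $q_j\geq 1$, whence the K\"unneth formula \eqref{eq-kunneth} (equivalently \eqref{eq-harmonictensor}) forces $\mathcal{H}_{p,q}(\Omega)=0$ for all $q\geq 1$, and every $\dbar$-closed form of positive degree is exact.
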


For domains which are the intersection of a finite number of smoothly
bounded pseudoconvex domains, such that the boundaries meet
transversely at each point of intersection,
the existence of a solution to the $\dbar$-equation smooth up to the boundary has been
obtained before \cite{ms4} using integral kernels. This includes the result of Corollary~\ref{cor-ms4},
but our method here is simpler and also leads to estimates in Sobolev spaces.
\subsection{Proof of Corollary~\ref{cor-strong}}  Fix $1\leq j\leq N$ and let $0\leq p\leq n_j$.
The canonical  solution operator on $\Omega_j$ maps the space $\smooth_{p,1}(\overline{\Omega})$ of
$(p,1)$-forms smooth up to the boundary to the space $\smooth_{p,0}(\overline{\Omega})$. Using
formula \eqref{eq-homotopy} on $(p,0)$ forms, we see that the harmonic projection $P_j$ 
preserves the space $\smooth_{p,0}(\overline{\Omega_j}).$

By the Sobolev embedding theorem, the Sobolev norms $\norm{\cdot}_{W^k(\Omega_j)}$ form 
a system of seminorms which define the usual Fr\'{e}chet space structure on $\smooth(\overline{\Omega_j})$.
Using a Fr\'{e}chet space version of the closed graph theorem (see e.g. \cite[Theorem~3 on p.~301]{hor}),
we easily see that the map $K_j$ is continuous from $\smooth_{p,1}(\overline{\Omega_j})$ to $\smooth_{p,0}(\overline{\Omega_j})$ and $P_j$  is  continuous from $\smooth_{p,0}(\overline{\Omega_j})$ to itself.
Using  the characterization of continuous
linear maps between Fr\'{e}chet spaces (see \cite[Proposition~2 on p.~97]{hor}), we conclude that for each $l\in\mathbb{N}$,
there is an $k=k(l,j,p)$ such that $K_j$  maps the Sobolev space $W^k_{p,1}(\Omega_j)$ continuously
to the Sobolev space $W^l_{p,0}(\Omega_j)$ and $P_j$ maps the Sobolev space $W^k_{p,0}(\Omega_j)$ to 
the Sobolev space $W^l_{p,0}(\Omega_j)$.
e can assume that for each $l$, the integer
$k_l= k(l,j,p)$ has been chosen to be independent of $j$ and $p$. Also, since $P_j$ is a projection, it 
follows that $k_l\geq l$.

Using the formula \eqref{eq-tnj}, the argument used in the proof 
of Theorem~\ref{thm-reg} shows that  
the operator $S$ 
maps the Partial Sobolev space $\pss^{k_l}_{\boldsymbol{p},1}(\Omega)$ to  $\pss^l_{\boldsymbol{p},0}(\Omega)$
for each integer $l$. It follows from \eqref{eq-cinfty} that $S$ maps $\smooth_{\boldsymbol{p},1}(\overline{\Omega})$
to $\smooth_{\boldsymbol{p},0}(\overline{\Omega})$. Using Theorem~\ref{prop-kiscansol} the 
smoothness up to the boundary of $\dbar^* \mathsf{N}f$ follows whenever $\dbar f=0$ and the $(p,1)$-form 
$f$ is smooth up to the boundary. The statement regarding the Bergman projection now follows from 
the formula $B=I-\dbar^*\mathsf{N}\dbar= I- K\dbar$.

\subsection{Some special product domains}\label{sec-chencao}
We will apply our results to some special cases when the domain
is not pseudoconvex or Stein.  The first case  is the product of
an annulus between two pseudoconvex domain  and a  pseudoconvex domain.

\begin{cor}  Let $\Omega_1=D_2\setminus\overline D_1$ be the annulus between two pseudoconvex domains $D_1\subset
\subset D_2\Subset \cx^n$  with smooth  boundary and let   $\Omega_2$ be a bounded  pseudoconvex domain in $\cx^m$ with Lipschiz boundary. 
Let $\Omega$ be  the  product domain  $\Omega=\Omega_1\times \Omega_2$.  
Then the $\dbar$ operator on $L^2(\Omega)$ has closed range.  Furthermore, for $0\leq p\leq n+m$, we have

\[ \dim H^{p,q}_{L^2}(\Omega)=\dim\mathcal{H}_{p,q}(\Omega)=\begin{cases} \infty, & \text{if $q=0$;}\\
0, & \text{if $q\not=0$ or $q\not=n-1$;}\\
\infty & \text{if $q=n-1$.} \end{cases}
\]
\end{cor}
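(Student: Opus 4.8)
The plan is to apply the Künneth formula \eqref{eq-kunneth} from the preceding theorem, reducing the computation of $H^{p,q}_{L^2}(\Omega)$ to the known $L^2$ cohomology of the factors. First I would recall the $L^2$ Dolbeault cohomology of a bounded pseudoconvex domain $\Omega_2 \Subset \cx^m$ with Lipschitz boundary: by H\"ormander's theorem $\dbar$ has closed range and $H^{p,q}_{L^2}(\Omega_2) = 0$ for $q \geq 1$, while $H^{p,0}_{L^2}(\Omega_2) = \mathcal{H}_{p,0}(\Omega_2)$ is the (infinite-dimensional) space of $L^2$ holomorphic $(p,0)$-forms. Next I would recall the $L^2$ cohomology of the annulus $\Omega_1 = D_2 \setminus \overline{D_1}$ between two smoothly bounded pseudoconvex domains in $\cx^n$: this is the content of the Hörmander–type results on annuli (cf.\ the $\dbar$ theory on such domains), namely $\dbar$ has closed range on $\Omega_1$, $H^{p,0}_{L^2}(\Omega_1)$ is infinite-dimensional, $H^{p,q}_{L^2}(\Omega_1) = 0$ for $1 \leq q \leq n-2$, $H^{p,n-1}_{L^2}(\Omega_1)$ is infinite-dimensional (this is the "hole" detected by $\dbar$ in top-minus-one degree), and $H^{p,n}_{L^2}(\Omega_1)=0$.

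The closed-range assertion is then immediate from Theorem~\ref{thm-all}, since each factor has closed range. For the dimension count, the Hilbert-space Künneth formula \eqref{eq-kunneth} gives, for each bidegree, a decomposition
\[
H^{\boldsymbol{p},q}_{L^2}(\Omega) = \bigoplus_{\substack{p_1+p_2 = \boldsymbol{p}\\ q_1+q_2 = q}} H^{p_1,q_1}_{L^2}(\Omega_1) \csor H^{p_2,q_2}_{L^2}(\Omega_2).
\]
Since $H^{p_2,q_2}_{L^2}(\Omega_2)$ vanishes unless $q_2 = 0$, only the terms with $q_1 = q$ survive, and these are nonzero precisely when $q = 0$ or $q = n-1$, in which case the corresponding $H^{p_1,q}_{L^2}(\Omega_1) \csor H^{p_2,0}_{L^2}(\Omega_2)$ is a Hilbert tensor product of a nonzero space with an infinite-dimensional one, hence infinite-dimensional. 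For $q \neq 0, n-1$ every summand vanishes. Finally, the identification $\dim H^{p,q}_{L^2}(\Omega) = \dim \mathcal{H}_{p,q}(\Omega)$ follows from the natural isomorphism $H^*_{L^2}(\Omega) \cong \mathcal{H}_*(\Omega)$ valid once $\dbar$ has closed range (as established in $\S$\ref{sec-l2setting}), or directly from \eqref{eq-harmonictensor}.

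The main obstacle is not the Künneth bookkeeping but rather citing or supplying the $L^2$ cohomology of the annulus $\Omega_1$ in all degrees — in particular the nonvanishing and infinite-dimensionality of $H^{p,n-1}_{L^2}(\Omega_1)$ and the closed-range property there, which rely on the $\dbar$-theory on annular domains between pseudoconvex sets; I would invoke the relevant results from the literature on the $\dbar$-problem on annuli rather than reprove them. A minor point to handle carefully is the statement "$q \neq 0$ or $q \neq n-1$" in the displayed formula, which should be read as the complement $q \notin \{0, n-1\}$; the proof as organized above makes clear that this is the intended range where the cohomology vanishes.
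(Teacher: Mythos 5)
Your proposal is correct and follows essentially the same route as the paper: the paper likewise deduces the result by combining the closed-range/Künneth theorem with Hörmander's vanishing for the pseudoconvex factor and the known cohomology of the annulus (nonzero only for $q=0$ and $q=n-1$), citing the literature for these facts about the factors rather than reproving them. Your explicit Künneth bookkeeping and your reading of ``$q\not=0$ or $q\not=n-1$'' as $q\notin\{0,n-1\}$ match the intended argument.
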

This corollary follows easily from the fact that $\dbar$ has closed range on  any bounded pseudoconvex domain in $\cx^n$  by H\"ormander \cite{Hor1} (regardless of the smoothness of the boundary) and for the annulus between smooth pseudoconvex domains (see  \cite{mcs1,mcs3}) for all degrees.  It also follows from the H\"ormander's $L^2$ existence theorems, the harmonic space $\mathcal{H}_{p,q}(\Omega_2)$ on the pseudoconvex domain  $\Omega_2$  vanishes unless $p=q=0$, when $\mathcal{H}_{0,0}$ is 
the space of  $L^2$  holomorphic functions.  For the annulus, we have that the cohomology $\mathcal{H}_{p,q}(\Omega_1)$ vanishes except for $q=0$ and $q=n-1$. Thus the corollary follows from the theorem above. 

For   the annulus  between two  concentric balls $\Omega_1=\{z\in\cx^n\colon
1<\abs{z}<2\}$,  the  nontrivial harmonic spaces $\mathcal H_{(p,n-1)}(\Omega_1)$    have  been  computed  explicitly by  H\"ormander (see Theorem 2.2 and equation (2.3)  in  \cite{Hor2}).  
We can  apply the corollary to the   case when  $\Omega=\{z\in\cx^n\colon
1<\abs{z}<2\}\times \{z\in\cx^m\colon \abs{z}<1\}$ in $\cx^{n+m}$.
 In this case, the closed range property for the ball   follows from the work of Kohn \cite{Ko1}.  For the annulus between two balls, the closure 
of the range of $\dbar$ in degree $(p,q)$ follows from \cite[pp. 57 ff.]{fk} for $q\not = n-1$ and
from  \cite{Hor2} if $q=n-1$. Thus $\dbar$ has closed range in the product domain $\Omega$. 
  The harmonic space  $\mathcal H_{(0,0)}$ on $\Omega$  is spanned by the monomials in $\cx^{m+n}$.  The  other nontrivial harmonic spaces $\mathcal H_{(p,n-1)}(\Omega)$   can be expressed   explicitly as the Hilbert tensor products  of harmonic forms  $\mathcal H_{(p,n-1)}(\Omega_1)$  with monomials in $\cx^m$.      We can therefore obtain a complete  description of the harmonic forms  in terms 
of Hilbert tensor products of spaces. Moreover, we have the following existence and regularity  results   for the $\dbar$-operator.

\begin{cor}  Let   $\Omega=\{z\in\cx^n\colon
1<\abs{z}<2\}\times \{z\in\cx^m\colon \abs{z}<1\}=\Omega_1\times\Omega_2\Subset \cx^{n+m}$,  $n\ge 1$ and $m\ge 1$.  
Then the $\dbar$-Neumann operator $\mathsf{N}$ exists on $\Omega$.  For any $(p,q)$-form $f$  with $\pss^k(\Omega)$ (or  $C^\infty(\overline\Omega)$)  coefficients,  where $k$ is any nonnegative integer and $1\leq q\leq n+m$,  such that $\dbar f=0$ and $f\perp   \mathcal{H}_{p,q}$,
there exists a solution $u  $ which has $\pss^k(\Omega)$ (or  $C^\infty(\overline \Omega)$) coefficients  with  $\dbar u=f$ in $\Omega$.
If $q=1$, 
we can choose 
$u=\dbar^*\mathsf{N}f$ to be the canonical solution. 
  \end{cor}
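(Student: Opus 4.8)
The plan is to reduce everything to Theorem~\ref{thm-all}, Theorem~\ref{prop-kiscansol} and the method of proof of Theorem~\ref{thm-reg}, applied to the two factors $\Omega_1=\{z\in\cx^n\colon 1<\abs{z}<2\}$ and $\Omega_2=\{z\in\cx^m\colon\abs{z}<1\}$. First I would verify the hypothesis of Theorem~\ref{thm-all}, namely that the $\dbar$-operator has closed range in every degree on each factor. On the ball $\Omega_2$ this is classical, since $\Omega_2$ is bounded and pseudoconvex (H\"ormander \cite{Hor1}, or Kohn \cite{Ko1}). On the spherical shell $\Omega_1$ it holds in every degree as well: for bidegree $(p,q)$ with $q\neq n-1$ this is contained in \cite[pp.~57~ff.]{fk}, and for $q=n-1$ in H\"ormander \cite{Hor2} (see also \cite{mcs1,mcs3}). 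Therefore, by Theorem~\ref{thm-all}, $\dbar$ has closed range in all degrees on $\Omega=\Omega_1\times\Omega_2$; in particular the strong Hodge decomposition holds on $\Omega$, the $\dbar$-Neumann operator $\mathsf{N}$ exists on $(p,q)$-forms with $L^2$ coefficients for all $p,q$, and the solution operator $S$ of \eqref{eq-tnj} is available.

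Next I would settle the $L^2$ solvability. If $f$ is a $\dbar$-closed $(p,q)$-form with $f\perp\mathcal{H}_{p,q}(\Omega)$, then the harmonic projection satisfies $Pf=0$, so the $N$-factor analog of the homotopy formula \eqref{eq-khomotopy-csor} provided by Theorem~\ref{thm-all} gives $\dbar(Sf)=f-Pf=f$; hence $u:=Sf$ is a solution of $\dbar u=f$ for every $q$ with $1\leq q\leq n+m$. When $q=1$, Theorem~\ref{prop-kiscansol} identifies $Sf$ with the canonical solution $\dbar^*\mathsf{N}f$. It then remains to prove that $u=Sf$ inherits the regularity of $f$.

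For the regularity I would follow the argument in the proof of Theorem~\ref{thm-reg}. By \eqref{eq-psstensor} we have an isometric identification $\pss^k_*(\Omega)=W^k_*(\Omega_1)\csor W^k_*(\Omega_2)$, and $S=K_1\csor I_2+\tau_1 P_1\csor K_2$ is a sum of tensor products of operators on the factors; by $\S$\ref{sec-optensor} it will be bounded from $\pss^k_*(\Omega)$ to itself as soon as the canonical solution operators $K_1,K_2$ and the harmonic projections $P_1,P_2$ preserve $W^k$-forms on $\Omega_1$ and $\Omega_2$ for every integer $k\geq0$. On the ball $\Omega_2$ this is the classical exact regularity of the $\dbar$-Neumann problem on a strictly pseudoconvex domain (Kohn \cite{Ko1}), which transfers to $K_2$ and $P_2$ by \cite[Theorem~6.2.2 and Theorem~6.1.4]{cs}. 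For the shell $\Omega_1$ --- which is \emph{not} pseudoconvex, so that the weighted device of Corollary~\ref{cor-main} is unavailable --- I would exploit the $U(n)$-symmetry of $\{z\in\cx^n\colon 1<\abs{z}<2\}$: decomposing $L^2_*(\Omega_1)$ into isotypic components reduces $\Box$ to a family of ordinary differential operators in the radial variable, from which one reads off the regularity of $\mathsf{N}$ (hence of $K_1$ and $P_1$) in every Sobolev norm; this is the analysis already carried out for the harmonic space in \cite{Hor2}. Granting this, $S$ maps $\pss^k_{p,q}(\Omega)$ boundedly into $\pss^k_{p,q-1}(\Omega)$ for every $k$, so $u=Sf$ has $\pss^k(\Omega)$ coefficients whenever $f$ does; and since $\bigcap_{k\geq0}\pss^k(\Omega)=\smooth(\overline{\Omega})$ by \eqref{eq-cinfty}, letting $k\to\infty$ gives the $\smooth(\overline{\Omega})$ statement.

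The step I expect to require the most care --- and the only one needing input beyond the theorems already proved --- is the Sobolev regularity of the $\dbar$-Neumann operator (equivalently, of the canonical solution and harmonic projection) on the annulus $\Omega_1$: because $\Omega_1$ has a pseudoconcave boundary component, this regularity cannot be produced by adding a large weight as in Corollary~\ref{cor-main}, and one must instead rely on the explicit radial--spherical decomposition of the shell. Everything else is a direct application of Theorems~\ref{thm-all}, \ref{prop-kiscansol} and~\ref{thm-reg} together with the boundedness of tensor products of operators from $\S$\ref{sec-optensor}.
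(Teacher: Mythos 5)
Your proposal is correct and follows essentially the route the paper intends: closed range on each factor (Folland--Kohn and H\"ormander for the shell, Kohn/H\"ormander for the ball), then Theorem~\ref{thm-all}, the homotopy formula $\dbar Sf = f - Pf$, Theorem~\ref{prop-kiscansol} for the $q=1$ identification, and the tensor-product boundedness argument of Theorem~\ref{thm-reg} on $\pss^k_*(\Omega)=W^k_*(\Omega_1)\csor W^k_*(\Omega_2)$, with \eqref{eq-cinfty} giving the $\smooth(\overline{\Omega})$ case. You correctly isolate the one external input --- Sobolev exact regularity of $K_1$ and $P_1$ on the non-pseudoconvex shell $\Omega_1$ --- which the paper handles by citation (e.g.\ transverse symmetry in the sense of \cite{ba1}, together with \cite{Hor2,mcs1}) rather than by the explicit $U(n)$ radial--spherical reduction you sketch; either justification is acceptable.
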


This answers the  question  posed by X. Chen.  
Another interesting case is when one of the factors in the product  is a compact manifold. In this case, the domain is pseudoconvex in the sense of Levi, but not Stein.  Our theorem can also be applied to  the following case. 
  
 \begin{cor}   
Let $\Omega=\Omega_1\times M$ be  the  product  of a bounded  pseudoconvex domain  $\Omega_1$ in $\cx^n$ and let $M$ be a compact complex hermitian manifold. 
Then the $\dbar$ operator on $L^2(\Omega)$ has closed range and the  Harmonic spaces satisfy the K\"{u}nneth formula 
\begin{equation} 
\mathcal{H}_*(\Omega_1)\tensor\mathcal{H}_*(M)=\mathcal{H}_*(\Omega).
\end{equation}
 
\end{cor}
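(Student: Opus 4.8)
The plan is to obtain this corollary as a direct instance of Theorem~\ref{thm-all} with $N=2$, taking $\Omega_1$ to be the given pseudoconvex domain and the second factor to be $M$ itself, regarded as a relatively compact domain in the hermitian manifold $M$ whose boundary is empty and hence vacuously Lipschitz. Consequently the only hypothesis that needs checking is that the $L^2$ $\dbar$-operator has closed range in every degree on each of the two factors.

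For $\Omega_1$, closed range in all degrees is exactly H\"ormander's $L^2$ existence theorem for bounded pseudoconvex domains in $\cx^n$ \cite{Hor1}, which requires no regularity of the boundary. For $M$, closed range is the classical Hodge theory of the Dolbeault complex on a compact complex manifold: the complex Laplacian $\Box=\dbar\dbar^*+\dbar^*\dbar$ is an elliptic, self-adjoint, second-order operator on the compact manifold $M$, so $\mathcal{H}_*(M)=\ker\Box$ is finite-dimensional and $\im(\dbar)$ is closed in $L^2_*(M)$ in every degree; see \cite[pp.~103ff.]{gh}. (The only point worth flagging is that the ellipticity of $\Box$ depends only on its principal symbol and so holds on \emph{any} compact hermitian manifold, K\"ahler or not; it is this that licenses the use of Hodge theory here.)

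With both hypotheses in hand, Theorem~\ref{thm-all} immediately gives that $\dbar$ has closed range on $L^2_*(\Omega)$ and, through \eqref{eq-harmonictensor}, the identity $\mathcal{H}_*(\Omega_1)\csor\mathcal{H}_*(M)=\mathcal{H}_*(\Omega)$ of Hilbert-space tensor products. To finish, I would note that since $M$ is compact the factor $\mathcal{H}_*(M)$ is finite-dimensional, so the Hilbert tensor product $\csor$ with it coincides with the algebraic tensor product $\tensor$ (a Hilbert space tensored with a finite-dimensional space is automatically complete); this rewrites the displayed identity in the stated form $\mathcal{H}_*(\Omega_1)\tensor\mathcal{H}_*(M)=\mathcal{H}_*(\Omega)$. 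There is no genuine obstacle in the argument: it is a verification of hypotheses together with this last finite-dimensionality bookkeeping, the one subtlety being the ellipticity of the Dolbeault Laplacian in the possibly non-K\"ahler case.
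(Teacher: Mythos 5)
Your proposal is correct and follows essentially the same route as the paper: invoke the product theorem after checking closed range on $\Omega_1$ via H\"ormander and on $M$ via Hodge theory for the compact factor, then observe that finite-dimensionality of $\mathcal{H}_*(M)$ makes the Hilbert tensor product coincide with the algebraic one. Your explicit remarks about the empty (vacuously Lipschitz) boundary of $M$ and the ellipticity of $\Box$ in the non-K\"ahler hermitian case are sensible clarifications of points the paper leaves implicit.
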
 
In this case, the  space $ \mathcal{H}_*(M)$ is finite dimensional and the Hilbert Tensor product
coincides with the algebraic tensor product.  In particular,   $\dbar$ has closed range on 
  the product $\mathbb{D}\times \cx\mathbb{P}^1$ of the disc and the Riemann sphere,
each with its natural metric,  thus answering  a question raised by J. Cao.

\end{document}